\theoremstyle{plain}
\newtheorem{dfn}{Definition}
\newtheorem{lem}{Lemma}
\newtheorem{prop}{Proposition}
\newtheorem{thm}{Theorem}
\newtheorem{conj}{Conjecture}
\newcommand{\Res}{\mathop{\mathrm{Res}}}
\newcommand{\E}{\mathbb{E}}
\newcommand{\Var}{\mathrm{Var}}
\newcommand{\vol}{\mathrm{vol}}
\newcommand{\q}{\mathfrak{q}}
\newcommand{\p}{\mathfrak{p}}
\newcommand{\bee}{\mathfrak{b}}
\newcommand{\Addresses}{{
  \footnotesize
  \bigskip
  \footnotesize

  \textsc{Department of Mathematics, Stanford University, Palo Alto, CA 94305, USA}\par\nopagebreak
  \textit{E-mail address:} \texttt{viviank@stanford.edu}

  \medskip

  \textsc{Department of Mathematics and Statistics, Queen's University, Kingston, Ontario, K7L 3N6, Canada}\par\nopagebreak
  \textit{E-mail address:} \texttt{brad.rodgers@queensu.ca}

  \medskip

  \textsc{Department of Applied Mathematics, H.I.T. - Holon Institute of Technology, Holon 5810201, Israel}\par\nopagebreak
  \textit{E-mail address:} \texttt{rodittye@gmail.com}

}}
\begin{document}

\title[Short intervals in algebraic number fields]{Sums of singular series and primes in short intervals in algebraic number fields}
\author{Vivian Kuperberg, Brad Rodgers, and Edva Roditty-Gershon}
\address{}
\email{}
\date{}

\begin{abstract}
Gross and Smith have put forward generalizations of  Hardy - Littlewood twin prime conjectures for algebraic number fields. We estimate the behavior of sums of a singular series that arises in these conjectures, up to lower order terms. More exactly, where $\mathfrak{S}(\eta)$ is the singular series, we find asymptotic formulas for smoothed sums of $\mathfrak{S}(\eta)-1$.

Based upon Gross and Smith's conjectures, we use our result to suggest that for large enough `short intervals' in an algebraic number field $K$, the variance of counts of prime elements in a random short interval deviates from a Cram\'{e}r model prediction by a universal factor, independent of $K$. The conjecture over number fields generalizes a classical conjecture of Goldston and Montgomery over the integers. Numerical data is provided supporting the conjecture.
\end{abstract}

\keywords{}

\maketitle

\section{Introduction}
\label{Introduction}

\subsection{Background and motivation: the integers}
\label{BackgroundIntegers}
A motivation for this paper is a classical conjecture made by Goldston and Montgomery \cite{GoMo} regarding counts of primes in short intervals. One purpose of this note will be to put forward an analogue of this conjecture in other algebraic number fields and support this conjecture with some numerical data. Perhaps surprisingly, when put in the right form, these conjectures are essentially the same regardless of what number field one works in. We recall the conjecture for the integers:

\begin{conj}[Goldston-Montgomery]
\label{conj:GoldstonMontgomery}
For $\delta \in (0,1)$, and $H = X^\delta$,
\begin{equation}
\label{GoldstonMontgomery}
\frac{1}{X} \int_0^X \left( \sum_{x < n \leq x+H}\Lambda(n) - H \right)^2\, dx \sim H (\log X - \log H),
\end{equation}
as $X\rightarrow\infty$.
\end{conj}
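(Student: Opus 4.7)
The plan is to expand the square, handle the mean term by the prime number theorem, and reduce the off-diagonal contribution to an averaged form of the Hardy--Littlewood prime pairs conjecture. Writing $S(x) \assign \psi(x+H)-\psi(x)$, I would expand
\[
\frac{1}{X}\int_0^X (S(x)-H)^2\,dx = \frac{1}{X}\int_0^X S(x)^2\,dx - 2H\cdot\frac{1}{X}\int_0^X S(x)\,dx + H^2.
\]
For $H = X^\delta$ with $\delta<1$, the prime number theorem yields $\frac{1}{X}\int_0^X S(x)\,dx \sim H$, so the task reduces to analyzing $\frac{1}{X}\int_0^X S(x)^2\,dx - H^2$ up to lower-order errors.

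For the square, I would swap summation and integration to obtain, modulo negligible edge terms,
\[
\frac{1}{X}\int_0^X S(x)^2\,dx = \sum_{|h|<H}\frac{H-|h|}{X}\sum_{n\leq X}\Lambda(n)\Lambda(n+h).
\]
The diagonal term $h=0$ contributes $\sim H\log X$. For $h\neq 0$, I would invoke the Hardy--Littlewood prime pairs conjecture $\sum_{n\leq X}\Lambda(n)\Lambda(n+h) \sim \mathfrak{S}(h)\,X$, with sufficient uniformity in $h$ for $0<|h|\leq H$. Writing $\mathfrak{S}(h) = 1 + (\mathfrak{S}(h)-1)$ and using $\sum_{0<|h|<H}(H-|h|) = H^2 + O(H)$, the $H^2$ here cancels the $-H^2$ above, leaving
\[
V(X,H) \sim H\log X + \sum_{0<|h|<H}(H-|h|)\bigl(\mathfrak{S}(h)-1\bigr).
\]

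To conclude the Goldston--Montgomery asymptotic, one then needs the singular-series estimate
\[
\sum_{0<|h|<H}(H-|h|)\bigl(\mathfrak{S}(h)-1\bigr) \sim -H\log H,
\]
which is established by unpacking $\mathfrak{S}(h)$ via its Euler product and applying a Dirichlet-series or contour-integral argument to extract the main term $-H\log H$. This is precisely the kind of estimate the present paper develops, and generalizes to arbitrary number fields.

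The principal obstacle is that the Hardy--Littlewood prime pairs conjecture is unproven; moreover, the form required here is strictly stronger than the pointwise conjecture, namely that the asymptotic $\sum_{n\leq X}\Lambda(n)\Lambda(n+h) = \mathfrak{S}(h)X + o(X)$ holds uniformly for $h$ up to $X^\delta$. Uniformity is essential, because unchecked errors across the $\asymp H$ individual asymptotics could swamp the true main term $-H\log H$. An alternative route reduces the conjecture via the explicit formula to Montgomery's pair correlation conjecture for zeros of $\zeta(s)$, but the difficulty merely migrates: both routes remain conditional on open conjectures of comparable depth, and the singular-series asymptotic above is then the key unconditional ingredient one can still hope to extract.
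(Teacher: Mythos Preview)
Your proposal is correct as a heuristic/conditional derivation and matches exactly the approach the paper describes: the statement is a \emph{conjecture}, not a theorem, and the paper does not prove it but instead references the heuristic derivation in Montgomery--Soundararajan \cite{MoSo1}, which proceeds precisely as you outline (expand the square, diagonal gives $H\log X$, off-diagonal via Hardy--Littlewood, then apply the singular-series estimate of Theorem~\ref{thm:SingularSeriesIntegerSum}). You have also correctly identified both the conditional nature of the argument and the role of the singular-series sum as the key unconditional ingredient the paper isolates and generalizes.
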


Here $\Lambda(n)$ is the von Mangoldt function.
The intervals $\{n\in \mathbb{Z} \,: x < n \leq x+H\}$ are referred to as \emph{short intervals}; we say they are \emph{short} because their length $H$ is smaller than the numbers $x$ around which they occur.

Note that the left hand side of \eqref{GoldstonMontgomery} is approximately the probabilistic variance of the sums 
\begin{equation}
\label{shortintervalLambda}
\sum_{x < n \leq x+H} \Lambda(n)
\end{equation}
where $x$ is a random variable chosen at uniform from the interval $[0,X]$. Indeed, for $x$ chosen randomly in this way, the expected value of \eqref{shortintervalLambda} is approximately $H$.

For what follows, it will be natural to reformulate Conjecture \ref{conj:GoldstonMontgomery} into a claim about direct counts of primes in short intervals, rather than sums of the von Mangoldt function. We show in Appendix \ref{TwoWeights} that Conjecture \ref{conj:GoldstonMontgomery} is equivalent to Conjecture \ref{conj:primevariance} to be stated momentarily. We make use of the following set up:
\begin{itemize}
\item Define 
$$\pi(x;H) := \# \{p\; \textrm{prime}:\, x < p \leq x+H\}$$
to be the number of primes in the short interval of size $H$, around $x$. 
\item Define the expectation of this quantity by
$$
E_\mathbb{N}(X;H) := \frac{1}{X} \int_0^{X} \pi(x,H)\, dx.
$$
\item Note that by the prime number theorem $\sum_{\substack{x < n \leq x+H \\n\geq  2}} 1/\log n$ is a good approximation for $\pi(x;H)$. Consider
$$
\widetilde{\pi}(x;H):= \pi(x;H)-\sum_{\substack{x < n \leq x+H \\ n\geq  2}} \frac{1}{\log n}
$$
which keeps track of the deviation of $\pi(x;H)$ from this approximation.
\item Define
$$
V_{\mathbb{N}}(X;H):= \frac{1}{X} \int_0^{X} \widetilde{\pi}(x;H)^2 \, dx.
$$
This quantity differs slightly from the probabilistic variance of the count $\pi(x;H)$ as $x$ ranges randomly from $0$ to $X$, but it should be thought of as capturing the same information.
\end{itemize}
It follows from the prime number theorem that uniformly for $0 \leq H \leq X$,
$$
E_\mathbb{N}(X;H) \sim \frac{H}{\log X}
$$
as $X \rightarrow \infty$. For the variance we have:
\begin{conj}[The Goldston-Montgomery conjecture reformulated]
\label{conj:primevariance}
For $\delta \in (0,1)$ and $H = X^\delta$, we have
\begin{equation}
\label{primevarianceZ}
V_{\mathbb{N}}(X;H) \sim (1-\delta) \, E_\mathbb{N}(X;H).
\end{equation}
\end{conj}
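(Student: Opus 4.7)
My plan is to derive Conjecture~\ref{conj:primevariance} from Conjecture~\ref{conj:GoldstonMontgomery} by partial summation, transferring the Goldston--Montgomery variance estimate for $\psi(x;H) := \sum_{x < n \leq x+H}\Lambda(n)$ to a variance estimate for $\widetilde\pi(x;H)$. The heuristic heart of the reduction is that for $n$ in the short window $(x, x+H]$ with $x$ not too small, the substitution $\log n \to \log x$ costs very little, whence
$$
\widetilde{\pi}(x;H) \;\approx\; \sum_{\substack{x < n \leq x+H \\ n \geq 2}} \frac{\Lambda(n) - 1}{\log n} \;\approx\; \frac{\psi(x;H) - H}{\log x}.
$$
Dividing \eqref{GoldstonMontgomery} by $(\log X)^2$ and using $\log X - \log H = (1-\delta)\log X$ together with $E_\mathbb{N}(X;H)\sim H/\log X$ then produces $V_\mathbb{N}(X;H) \sim (1-\delta) E_\mathbb{N}(X;H)$.

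To carry this out, I would first introduce the weighted count $\pi^\star(x;H) := \sum_{x < n \leq x+H}\Lambda(n)/\log n$, which differs from $\pi(x;H)$ only through contributions of prime powers $p^k$ with $k\geq 2$; these number $O(H/\sqrt{x})$ in the interval for $x$ large. Substituting $\pi \to \pi^\star$ in the definition of $\widetilde\pi$ yields
$$
\widetilde\pi(x;H) \;=\; \sum_{\substack{x < n \leq x+H \\ n \geq 2}}\frac{\Lambda(n)-1}{\log n} + O\!\left(\frac{H}{\sqrt x}\right).
$$
I would then replace $1/\log n$ by $1/\log x$; the Taylor estimate $1/\log n - 1/\log x = O(H/(x\log^2 x))$ for $n \in (x,x+H]$ introduces an error $R(x)$ obeying $|R(x)| \ll (H/(x\log^2 x))\,(|\psi(x;H)-H| + H) + H/\sqrt x$.

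Expanding the square in $V_\mathbb{N}(X;H) = X^{-1}\int_0^X \bigl((\psi(x;H)-H)/\log x + R(x)\bigr)^2\, dx$ gives a main term converging to $H(\log X - \log H)/(\log X)^2$, a cross term bounded via Cauchy--Schwarz by $(X^{-1}\int R^2)^{1/2}$ times the square root of the Goldston--Montgomery variance, and an $R$-squared contribution. The main technical obstacle is to show $X^{-1}\int_0^X R(x)^2\, dx = o(H(\log X - \log H)/(\log X)^2)$: the piece of $R$ weighted by $|\psi(x;H)-H|$ is handled by feeding in Conjecture~\ref{conj:GoldstonMontgomery} itself, the deterministic piece is a routine bound that remains $o(\cdot)$ throughout $\delta \in (0,1)$, the prime-power error contributes $O(H^2/X)$ which is negligible for $\delta < 1$, and the small-$x$ range $x \leq X^{1-\epsilon}$ is dispatched using the trivial estimate $|\widetilde\pi(x;H)| \ll H$. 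Assembling these estimates delivers the claimed asymptotic; the argument is conditional in the sense that it shows Conjecture~\ref{conj:primevariance} is equivalent to Conjecture~\ref{conj:GoldstonMontgomery}, as claimed in Appendix~\ref{TwoWeights}.
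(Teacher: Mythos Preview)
Your outline is the same reduction the paper carries out in Appendix~\ref{TwoWeights}: pass from $\widetilde\pi(x;H)$ to $(\psi(x;H)-H)/\log x$ via partial summation, then compare the resulting $L^2$ average with the Goldston--Montgomery integral. The prime-power and Taylor-expansion steps are fine, and your use of Cauchy--Schwarz for the cross term matches the paper's use of the triangle inequality for $L^2$-norms.

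There is, however, a real gap in your treatment of the range $x\le X^{1-\epsilon}$. The trivial bound $|\widetilde\pi(x;H)|\ll H$ contributes
\[
\frac{1}{X}\int_0^{X^{1-\epsilon}} H^2\,dx \;\asymp\; H^2 X^{-\epsilon}
\]
to $V_{\mathbb N}(X;H)$, and for this to be $o(H/\log X)$ you would need $\epsilon>\delta$. But then on the complementary range $[X^{1-\epsilon},X]$ you no longer have $1/\log x=(1+o(1))/\log X$, so your ``main term'' does not converge to $H(\log X-\log H)/(\log X)^2$. The same problem recurs when you assert that the main integral $X^{-1}\int (\psi(x;H)-H)^2/\log^2 x\,dx$ has the right limit: you have not explained why the small-$x$ portion of \emph{that} integral is negligible.

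The paper closes this gap by explicitly assuming the Riemann hypothesis (noting that it is implied by either conjecture anyway) and invoking Selberg's upper bound
\[
\frac{1}{X}\int_0^X |\psi(x;H)-H|^2\,dx \;=\; O(H\log^2 X),
\]
which, applied at scale $X^{1-\epsilon}$, gives a contribution $O(X^{-\epsilon}H\log^2 X)=o(H/\log X)$ for any fixed $\epsilon>0$. The paper also uses the pointwise RH bound $\widetilde\psi(x)=O(x^{1/2+\epsilon})$ when integrating by parts. You could equally well repair your argument by invoking Conjecture~\ref{conj:GoldstonMontgomery} at the smaller scale $X'=X^{1-\epsilon}$ (with exponent $\delta/(1-\epsilon)<1$) in place of the trivial bound; either way, something beyond $|\widetilde\pi|\ll H$ is required.
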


The conjecture of Goldston and Montgomery has attracted interest for at least two reasons.  One: Goldston and Montgomery (building upon work of Gallagher and Mueller \cite{GaMu}) showed that it is connected to the local spacing of zeros of the Riemann zeta function. Two: the prediction \eqref{primevarianceZ} deviates rather starkly from the prediction one would make based off the Cram\'{e}r random model for primes. Under the Cram\'{e}r model one expects\footnote{Indeed, the Cram\'{e}r model suggests that the distribution of $\pi(x;H)$ for $x \in [0,X]$ chosen randomly, should be approximated by the distribution of a sum of random variables $X_1 + X_2 + \cdots + X_H$, where each $X_i$ is an i.i.d copy of a Bernoulli random variable, taking $1$ with probability $1/\log X$. Note that $\E (X_1 + \cdots + X_H) = \Var (X_1 + \cdots + X_H) = H/\log X$. Background information about the Cram\'{e}r model and its relation to this problem can be found in \cite{So}.} that 
\begin{equation}
\label{CramerModelIntegers}
V_\mathbb{N}(x;H) \sim E_{\mathbb{N}}(X;H).
\end{equation}

Indeed, it is common to justify a belief in this conjecture of Goldston and Montgomery by making use of the following conjecture of Hardy and Littlewood \cite{HaLi} (which itself also deviates from the Cram\'er model).
\begin{conj}[Hardy-Littlewood]
\label{conj:HardyLittlewoodIntegers}
For fixed integers $h \neq 0$,
\begin{equation}
\label{HardyLittlewoodIntegers}
\sum_{n\leq X} \mathbf{1}_\mathcal{P}(n)\mathbf{1}_\mathcal{P}(n+h) \sim \mathfrak{S}(h) \sum_{2 \leq n \leq X} \frac{1}{\log^2 n}
\end{equation}
where
$$
\mathfrak{S}(h):= \prod_p \frac{(1- \nu_h(p)/p)}{(1-1/p)^2}, \quad \textrm{with} \quad \nu_h(p) = \begin{cases} 2 & \textrm{if}\; h \not\equiv 0 \,( \mathrm{mod} \ p) \\ 1 & \textrm{if} \; h \equiv 0 \,(\mathrm{mod} \ p) \end{cases}
$$
\end{conj}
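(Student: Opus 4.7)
Since Conjecture \ref{conj:HardyLittlewoodIntegers} is a classical unresolved problem, the best one can offer is a heuristic justification via the Hardy--Littlewood circle method, together with a plan for how a rigorous attack might proceed. The strategy is to pass from the prime-indicator sum to the weighted correlation $\Psi_h(X):=\sum_{n\le X}\Lambda(n)\Lambda(n+h)$, conjecturally $\sim \mathfrak{S}(h)\,X$, and then recover \eqref{HardyLittlewoodIntegers} by partial summation; contributions from proper prime powers on both sides are $O(X^{1/2}\log X)$ and thus harmless.

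The key step is to write $\Psi_h(X)=\int_0^1 |S(\alpha)|^2 e(-h\alpha)\,d\alpha$, where $S(\alpha):=\sum_{n\le X}\Lambda(n)e(n\alpha)$ and $e(\theta):=e^{2\pi i\theta}$, and to decompose the unit interval into major arcs $\mathfrak{M}$, consisting of narrow neighborhoods of rationals $a/q$ with $q\le Q$ for a carefully chosen $Q=Q(X)$, and complementary minor arcs $\mathfrak{m}$. On $\mathfrak{M}$, the Siegel--Walfisz theorem yields $S(a/q+\beta)\approx (\mu(q)/\varphi(q))T(\beta)$ with $T(\beta):=\sum_{n\le X}e(n\beta)$. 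Substituting and integrating, the major-arc contribution tends to
\[
X\sum_{q=1}^{\infty}\frac{\mu(q)^{2}}{\varphi(q)^{2}}\,c_q(h),
\]
where $c_q(h)$ is the Ramanujan sum. A standard multiplicativity calculation identifies this arithmetic sum with the Euler product $\prod_p(1-\nu_h(p)/p)/(1-1/p)^2=\mathfrak{S}(h)$, producing the expected main term.

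The principal obstacle, and the reason the conjecture has resisted proof for nearly a century, is the minor-arc integral $\int_{\mathfrak{m}}|S(\alpha)|^2 e(-h\alpha)\,d\alpha$. Unlike the ternary Goldbach or Waring problems, this binary correlation admits no free variable to exploit for Weyl- or Vinogradov-type cancellation; Parseval bounds it trivially by $\ll X\log X$, which is the same order as the main term. Existing technology (Selberg sieve upper bounds, Bombieri--Vinogradov, the Green--Tao transference) delivers the correct order of magnitude up to a constant factor, but not the sharp asymptotic with the $\mathfrak{S}(h)$ constant, and even the assumption of GRH does not appear to close the gap. A genuine proof would require either new analytic input showing cancellation in $|S(\alpha)|^2$ on $\mathfrak{m}$, or a quantitative refinement of the Green--Tao--Ziegler framework for $\Lambda$-correlations. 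My proposal therefore stops at the heuristic: the circle method pinpoints the correct answer, while any future proof must supply the missing minor-arc estimate.
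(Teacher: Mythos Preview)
Your proposal is appropriate: the statement is labeled a \emph{Conjecture} in the paper and is not proved there, so there is no ``paper's own proof'' to compare against. You correctly recognize that Conjecture~\ref{conj:HardyLittlewoodIntegers} is an open problem, and the circle-method heuristic you sketch---reducing to $\sum_{n\le X}\Lambda(n)\Lambda(n+h)$, extracting $\mathfrak{S}(h)X$ from the major arcs via the Ramanujan-sum expansion $\mathfrak{S}(h)=\sum_q \mu(q)^2 c_q(h)/\varphi(q)^2$, and identifying the minor-arc integral as the insurmountable obstacle---is the standard and honest account. The paper itself simply states the conjecture as background and motivation, then moves on to use it heuristically; your treatment is entirely consistent with that.
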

Here $\mathbf{1}_\mathcal{P}(n)$ is the function that is $1$ when $n$ is prime and $0$ otherwise, and the product is over all primes $p$. We note that the Cram\'{e}r model would make the almost certainly false (but still not disproven) prediction that the right hand side of \eqref{HardyLittlewoodIntegers} is just $\sum_{2\leq n \leq X} \frac{1}{\log^2 n}$. Of course the right hand side of \eqref{HardyLittlewoodIntegers} can be rewritten $\sim \mathfrak{S}(h)x/\log^2 X$, but the expression we have written is expected to be a more accurate approximation.

The heuristic derivation of Conjecture \ref{conj:GoldstonMontgomery} based upon Conjecture \ref{conj:HardyLittlewoodIntegers} can be found in \cite{MoSo1}. A key point of that derivation, which we record because we will prove an analogue of it, is the following estimate for sums of the function $\mathfrak{S}(h)$:
\begin{thm}[Montgomery]
\label{thm:SingularSeriesIntegerSum}
$$
\sum_{h=1}^H (\mathfrak{S}(h)-1)(1-\frac{h}{H}) \sim -\frac{1}{2}\log H.
$$
\end{thm}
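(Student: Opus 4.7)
My plan is to use the Ramanujan expansion of the singular series and evaluate the resulting sum carefully, exploiting the fact that the Fej\'er weight $(1-h/H)$ produces clean cancellation.

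From the Euler product definition of $\mathfrak{S}(h)$, one obtains the classical Ramanujan expansion
\[
\mathfrak{S}(h) - 1 \;=\; \sum_{q \ge 2} \frac{\mu(q)^2}{\phi(q)^2}\, c_q(h),
\qquad c_q(h) = \sum_{d \mid (q,h)} d\,\mu(q/d).
\]
Substituting this into the sum in question and swapping the order of summation reduces matters to estimating
\[
\sum_{q \ge 2} \frac{\mu(q)^2}{\phi(q)^2}\, T_q(H),
\qquad T_q(H) := \sum_{h=1}^{H} c_q(h)(1-h/H).
\]

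The heart of the proof is an explicit evaluation of $T_q(H)$ for $q \le H$. Substituting the divisor formula for $c_q(h)$, swapping sums, parametrising multiples of $d$ by $h = dm$ with $1 \le m \le \lfloor H/d \rfloor$, and writing $\lfloor H/d\rfloor = H/d - \theta_d$ (with $\theta_d = \{H/d\}$), a short elementary computation that collects like powers of $H$ gives
\[
T_q(H) \;=\; \frac{H}{2}\sum_{d \mid q}\mu(q/d) \;-\; \frac{1}{2}\sum_{d \mid q} d\,\mu(q/d) \;+\; \frac{1}{2H}\sum_{d \mid q} d^2 \mu(q/d)\,\theta_d(1-\theta_d).
\]
For $q \ge 2$ the first sum vanishes by $\mu \ast \mathbf{1} = [q=1]$, while the second equals $\phi(q)$ via $\phi = \mu \ast \mathrm{id}$. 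Hence $T_q(H) = -\phi(q)/2 + R_q(H)$ with $|R_q(H)| \le \sigma_2(q)/(8H)$. Plugging back in produces the main contribution
\[
-\frac{1}{2}\sum_{2 \le q \le H}\frac{\mu(q)^2}{\phi(q)} \;\sim\; -\frac{1}{2}\log H,
\]
via the classical Mertens-type estimate $\sum_{q \le H}\mu(q)^2/\phi(q) = \log H + O(1)$.

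The main technical obstacle is to show that the remaining contributions are $o(\log H)$. The $R_q(H)$ error is mild: $\sum_{q \le H}\mu(q)^2 \sigma_2(q)/(\phi(q)^2 H) = O(1)$, since $\sigma_2(q)/\phi(q)^2$ has bounded average over squarefree $q$. The tail $q > H$ is more delicate — a crude absolute-value bound on $|c_q(h)|$ is insufficient, so one must exploit cancellation in the Fej\'er-weighted sum. The cleanest way is to recognise that, with $F_H$ the Fej\'er kernel,
\[
T_q(H) \;=\; \tfrac{1}{2}\sum_{a \in (\mathbb{Z}/q)^*}\bigl(F_H(a/q) - 1\bigr),
\]
and to apply the standard bound $F_H(\alpha) \ll \min\bigl(H,\, 1/(H\|\alpha\|^2)\bigr)$ together with partial summation and the decay of $\mu(q)^2/\phi(q)^2$. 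This should yield an $O(1)$ contribution, completing the asymptotic.
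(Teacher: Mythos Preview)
Your approach via the Ramanujan expansion is essentially the route the paper takes to prove its general Theorem~\ref{thm:SingularSeriesGeneralSum} (of which this statement is the $K=\mathbb{Q}$, $w(x)=(1-|x|)_+$ case). The paper notes that Montgomery's original argument, recorded in \cite{MoSo1}, instead exploits that $\mathfrak{S}(h)$ is a constant times a multiplicative function and proceeds via Dirichlet series; your method is the Fourier-analytic alternative, and your small-$q$ computation $T_q(H)=-\phi(q)/2 + O(\sigma_2(q)/H)$ is exactly what the paper's lattice-count Lemma~\ref{lem:smooth_counts} gives when specialized to $\mathbb{Z}$.

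The genuine gap is in the tail $q>H$. The pointwise Fej\'er bound $F_H(\alpha)\ll\min(H,\,1/(H\|\alpha\|^2))$ on its own yields only $|T_q(H)|\ll q$, and then $\sum_{q>H}\mu(q)^2\,q/\phi(q)^2$ diverges like $\sum_{q>H}1/q$; ``partial summation and the decay of $\mu(q)^2/\phi(q)^2$'' does not rescue this. Equivalently, writing $T_q(H)=\tfrac12\sum_a^\ast F_H(a/q)-\tfrac12\phi(q)$ splits the sum into two pieces that are \emph{individually} divergent when summed against $\mu(q)^2/\phi(q)^2$. What actually works is to M\"obius-invert $\sum_a^\ast$ into sums over divisors $e\mid q$ (equivalently, evaluate $\sum_{b\bmod e}F_H(b/e)$, which is $e$ for $e>H$ and $H+O(e)$ for $e\le H$), and then split the divisors of $q$ by whether $e\le H$ or $e>H$. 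This produces error terms of the shape $\sum_{q>H}\frac{\mu(q)^2}{\phi(q)^2}\sum_{e\mid q,\,e\le H}e$ and $H\sum_{q>H}\frac{\mu(q)^2}{\phi(q)^2}\sum_{e\mid q,\,e>H}1$, each of which is $O(1)$ --- these are precisely the estimates \eqref{Sum_b}--\eqref{Sum_d} in the paper's Lemma~\ref{lem:complicated_sums}. So your Fej\'er-kernel reformulation is correct, but to close the argument you need this divisor decomposition rather than the pointwise kernel bound.
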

Montgomery's proof of this theorem is recorded in \cite{MoSo1}, based upon an analysis of Dirichlet series and the observation that $\mathfrak{S}(h)$ is a constant factor times a multiplicative function.

It is worth remarking finally that for $H = X^{o(1)}$, one expects the Cram\'er model prediction \eqref{CramerModelIntegers} to be correct; and indeed for $H = \lambda \log X$, one does expect as predicted by the Cram\'er model that the count of primes $\pi(x;H)$ in a random short interval is distributed like a Poisson random variable. Gallagher \cite{Ga} showed that such a prediction is consistent with conjectures of Hardy and Littlewood which are slightly more general than Conjecture \ref{conj:HardyLittlewoodIntegers}. For $H$ growing faster than $\log X$, Montgomery and Soundararajan \cite{MoSo2} argued on the assumption of such Hardy-Littlewood conjectures that $\pi(x;H)$ tends towards a normal distribution, with variance given by \eqref{CramerModelIntegers} and \eqref{primevarianceZ}. For $H = X^{o(1)}$ this is consistent with the Cram\'er model, while for $H$ growing at a faster rate as we have explained it is not.

\subsection{Background and motivation: algebraic number fields}
\label{BackgroundNumberFields}

We will prove an analogue of Theorem \ref{thm:SingularSeriesIntegerSum} for arbitrary algebraic number fields and use this to develop evidence for a variant of Conjecture \ref{conj:primevariance}. We make use of a set up that was introduced by Tsai and Zaharescu in \cite{TsZa} to examine related questions.

We consider a number field $K$ with $[K: \mathbb{Q}] = n$ and choose a fixed integral basis $\{\alpha_1, ..., \alpha_n\}$ for $\mathcal{O}_K$, the ring of integers of $K$. Let $m: K \rightarrow \mathbb{Q}^n$ be the linear map defined with respect to such a basis by
\begin{equation}\label{mmap}
m(\alpha):= (k_1,...,k_n),\quad \textrm{for}\, \alpha = k_1 \alpha_1 + \cdots + k_n \alpha_n,\; \textrm{with}\; k_i \in \mathbb{Q}\; \textrm{for all}\; i.
\end{equation}
Note that $m$ takes $\mathcal{O}_K$ to $ \mathbb{Z}^n$.

Over $\mathbb{N}$ an interval of size $H$ near a point $x$ consisted of integers $n$ such that $x < n \leq x+H$. We generalize this to algebraic number fields by letting $\|\cdot\|$ be an arbitrary norm on $\mathbb{R}^n$, making $\mathbb{R}^n$ into a normed linear space. We will turn our attention in $\mathcal{O}_K$ to sets of the sort
$$
\{ \alpha \in \mathcal{O}_K:\, \|m(\alpha) - x \| \leq H\},
$$
as $x$ varies over $\mathbb{R}^n$. We will be interested in the variance of counts of prime elements in such `short intervals' chosen randomly. One motivation for this definition of a short interval comes from the work of Keating and Rudnick in a function field setting \cite{KeRu}.

Throughout this paper elements of $\mathcal{O}_K$ are denoted by Greek characters such as $\alpha$ and prime elements of this ring usually by $\omega$. (By ``prime element" we mean as usual that the ideal $(\omega)$ generated by this element is prime.) For ideals we use the script $\mathfrak{a}$, and for prime ideals usually $\mathfrak{p}$. We use $N(\alpha) = N_{K/\mathbb{Q}}(\alpha)$ to denote the norm of an element of $K$, while $N(\mathfrak{a})$ also denotes the norm of an ideal.

It follows from the generalized prime number theorem of Mitsui \cite[Main Theorem]{Mi} that
\begin{thm}[Mitsui's generalized prime number theorem]
\label{thm:Mitsui}
Let $K$ be an algebraic number field. For any norm $\|\cdot\|$ and map $m$ as above,
\begin{equation}
\label{generalPNT}
\# \{ \omega \in \mathcal{O}_K\; \textrm{prime}: \; \|m(\omega)\| \leq X\} \sim \frac{1}{\mathfrak{R}_K}\sum_{\substack{\|m(\alpha)\| \leq X \\ |N\alpha| > 1}} \frac{1}{\log |N(\alpha)|},
\end{equation}
where $\mathfrak{R}_K := \Res_{s=1} \zeta_K(s)$ is the residue at $s=1$ of the Dedekind zeta function of $K$, and the sum is over all $\alpha \in \mathcal{O}_K$.
\end{thm}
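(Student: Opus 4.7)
The plan is to derive the stated asymptotic as a corollary of Mitsui's Main Theorem in \cite{Mi}, which gives (in our notation) that for the ball $B_X := \{x \in \mathbb{R}^n : \|x\| \leq X\}$,
$$\#\{\omega \in \mathcal{O}_K \text{ prime} : m(\omega) \in B_X\} \sim \frac{\vol(B_X)}{\mathfrak{R}_K \cdot n \log X}$$
as $X \to \infty$; this is just the specialization of Mitsui's result to the region $B_X$, which is admissible because $B_X$ is the dilate of a fixed convex body. The task is therefore reduced to showing that the right-hand side of \eqref{generalPNT} is asymptotic to $\vol(B_X)/(n \log X)$.

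First I would handle the unweighted count. Since $m(\mathcal{O}_K) = \mathbb{Z}^n$ by construction, a standard Lipschitz-principle lattice point count in the convex body $B_X$ gives
$$\#\{\alpha \in \mathcal{O}_K : \|m(\alpha)\| \leq X\} \sim \vol(B_X) \asymp X^n.$$
For a ``typical'' such $\alpha$, one expects $|N\alpha| \asymp X^n$ and hence $\log |N\alpha| \sim n \log X$. To exploit this I would split the sum at a threshold $Y = X^{n(1-\epsilon)}$: on the set where $|N\alpha| > Y$, we have $\log |N\alpha| = (1+O(\epsilon)) \, n \log X$ uniformly, so the contribution to the sum is $\sim \vol(B_X)/(n \log X)$ as $\epsilon \to 0$. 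On the complementary set $\{1 < |N\alpha| \leq Y\}$, using Landau's estimate on ideals of bounded norm together with control of the unit action on $B_X$, one gets a bound
$$\#\{\alpha \in \mathcal{O}_K : \|m(\alpha)\| \leq X, \; |N\alpha| \leq Y\} \ll Y \, (\log X)^{O(1)} X^{o(1)},$$
and since $\log|N\alpha| \geq \log 2$ on this set, the weighted contribution is $o(X^n/\log X)$ provided $\epsilon$ is sufficiently small.

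Combining the two pieces yields
$$\sum_{\substack{\|m(\alpha)\| \leq X \\ |N\alpha|>1}} \frac{1}{\log |N\alpha|} \sim \frac{\vol(B_X)}{n \log X},$$
and dividing by $\mathfrak{R}_K$ matches Mitsui's prime count, giving \eqref{generalPNT}. The main technical obstacle is the small-norm estimate: when $K$ has an infinite unit group, the orbit of a single non-unit $\alpha$ under $\mathcal{O}_K^\times$ intersects $B_X$ in potentially many points, and one must verify that a fundamental domain for this unit action intersects $B_X$ in volume of order $X^n/\text{(reg)}$ so that the unit group does not inflate the count of small-norm elements beyond what Landau's bound allows.
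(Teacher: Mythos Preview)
The paper does not actually give a proof of this theorem: it is stated as a direct consequence of Mitsui's Main Theorem \cite[Main Theorem]{Mi}, with no further argument. Your plan---invoke Mitsui for the prime count and then show the weighted sum on the right of \eqref{generalPNT} is asymptotic to $\vol(B_X^{\|\cdot\|})/(n\log X)$---is the right decomposition, and in fact the second step is exactly the $k=1$ case of Theorem~\ref{thm:logsum}, which the paper cites as \cite[Thm.~4.1]{TsZa} rather than proving. So your proposal is consistent with the paper's logic; you are simply writing out a sketch of the sum evaluation that the paper outsources to Tsai--Zaharescu.

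The one genuine soft spot in your sketch is the small-norm estimate you flag at the end. The bound you write,
\[
\#\{\alpha\in\mathcal{O}_K:\ \|m(\alpha)\|\leq X,\ |N\alpha|\leq Y\}\ \ll\ Y\,(\log X)^{O(1)}X^{o(1)},
\]
is morally correct (the exponent on $\log X$ should be the unit rank $r+s-1$), but turning ``control of the unit action on $B_X$'' into a clean proof requires some care with a fundamental domain for $\mathcal{O}_K^\times$ in the logarithmic Minkowski space, and this is exactly the content Tsai--Zaharescu supply. Since the paper already quotes their result as Theorem~\ref{thm:logsum}, you may as well cite it directly rather than redevelop it; your derivation then reduces to one line combining Mitsui with Theorem~\ref{thm:logsum}.
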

One should interpret \eqref{generalPNT} as saying that a random algebraic integer $\alpha$ has probability roughly $\frac{1}{\mathfrak{R}_K} \frac{1}{\log |N(\alpha)|}$ of being a prime element.

Theorem \ref{thm:Mitsui} generalizes the prime number theorem, and correspondingly Gross and Smith \cite{GrSm} have introduced an analogue of the Hardy-Littlewood Conjecture \ref{conj:HardyLittlewoodIntegers}.

\begin{conj}[Gross-Smith]
\label{conj:GrossSmithNumberFields}
Let $K$ be an algebraic number field and $\|\cdot\|$ a norm as above. For fixed nonzero $\eta \in \mathcal{O}_K$,
\begin{equation}
\label{GrossSmithNumberFields}
\sum_{\|m(\alpha)\| \leq X} \mathbf{1}_{\mathcal{P}}(\alpha) \mathbf{1}_\mathcal{P}(\alpha+\eta) \sim \mathfrak{S}(\eta) \frac{1}{\mathfrak{R}_K^2} \sum_{\substack{\|m(\alpha)\| \leq X \\ |N\alpha| > 1}} \frac{1}{\log^2 |N\alpha|},
\end{equation}
where
\begin{equation}
\label{singularseriesNumberFields}
\mathfrak{S}(\eta):= \prod_{\mathfrak{p}} \frac{(1 - \nu_\eta(\mathfrak{p})/N\mathfrak{p})}{(1-1/N\mathfrak{p})^2}, \quad \textrm{with}\quad \nu_\eta(\mathfrak{p}): =  \begin{cases} 2 & \textrm{if}\; \eta \notin \mathfrak{p} \\ 1 & \textrm{if} \; \eta\in\mathfrak{p} \end{cases}
\end{equation}
\end{conj}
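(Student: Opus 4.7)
Since Conjecture \ref{conj:GrossSmithNumberFields} directly generalizes the twin prime conjecture to arbitrary number fields, a genuine proof would entail, even in the special case $K = \mathbb{Q}$, the classical Hardy-Littlewood Conjecture \ref{conj:HardyLittlewoodIntegers}, which is entirely open. My plan is therefore to supply the heuristic derivation in the Hardy-Littlewood style, which is the sense in which Gross and Smith justify the formula. This derivation will proceed in two movements: a naive Cram\'er-type prediction followed by a correction at each prime ideal.

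The first step is to write down the naive prediction. Mitsui's Theorem \ref{thm:Mitsui} suggests that a generic algebraic integer $\alpha$ with $\|m(\alpha)\|\le X$ is a prime element with probability roughly $\frac{1}{\mathfrak{R}_K \log|N\alpha|}$. If primality of $\alpha$ and primality of $\alpha+\eta$ were independent events, summing gives the right-hand side of \eqref{GrossSmithNumberFields} with $\mathfrak{S}(\eta)$ replaced by $1$.

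Next I would correct for correlations at each prime ideal $\mathfrak{p}$. Being a prime element requires $\alpha \notin \mathfrak{p}$, and likewise $\alpha+\eta \notin \mathfrak{p}$. Working in the residue field $\mathcal{O}_K/\mathfrak{p}$ of size $N\mathfrak{p}$, the density of $\alpha$ for which both conditions hold is $1 - 1/N\mathfrak{p}$ when $\eta \in \mathfrak{p}$ (the two congruences coincide) and $1 - 2/N\mathfrak{p}$ when $\eta \notin \mathfrak{p}$ (two distinct forbidden classes). Dividing by the naive independent density $(1 - 1/N\mathfrak{p})^2$ produces the local correction factor $(1 - \nu_\eta(\mathfrak{p})/N\mathfrak{p})/(1 - 1/N\mathfrak{p})^2$, and multiplying over all prime ideals $\mathfrak{p}$ delivers $\mathfrak{S}(\eta)$. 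Convergence of the Euler product is not hard: for $\eta \notin \mathfrak{p}$ each factor is $1 + O(1/N\mathfrak{p}^2)$, and $\sum_\mathfrak{p} 1/N\mathfrak{p}^2$ converges by comparison with $\zeta_K(2)$.

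The hard part — which is genuinely an obstacle rather than a routine calculation — is passing from local information at each prime ideal to the global asymptotic. This is precisely the obstruction present already in the integer case: sieve methods (including their known generalizations to number fields) yield upper bounds of the conjectured order of magnitude, but no asymptotic matching it from below. For the purposes of this paper, as signalled by the abstract, the conjecture is instead taken as input, and the work that follows is to estimate weighted sums of $\mathfrak{S}(\eta) - 1$ in the spirit of Theorem \ref{thm:SingularSeriesIntegerSum}.
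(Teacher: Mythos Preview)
Your assessment is correct: Conjecture \ref{conj:GrossSmithNumberFields} is stated in the paper as a conjecture, not a theorem, and the paper offers no proof or even heuristic derivation of it --- it simply cites Gross and Smith \cite{GrSm} and takes the conjecture as input for the heuristic argument in Section \ref{PrimesHeuristicDerivation}. Your Hardy--Littlewood style heuristic (Cram\'er prediction corrected by local factors at each prime ideal) is the standard motivation and is more than the paper itself supplies; there is nothing to compare against.
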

Here both sums in \eqref{GrossSmithNumberFields} are over $\alpha \in \mathcal{O}_K$, and $\mathbf{1}_\mathcal{P}(\alpha)$ is $1$ if $\alpha$ is a prime element and $0$ otherwise. The product is over all prime ideals of $\mathcal{O}_K$.

In fact Gross and Smith make a more general conjecture than Conjecture \ref{conj:GrossSmithNumberFields}, just as Hardy and Littlewood did for the natural numbers. Conditioned on this more general conjecture, Tsai and Zaharescu show that the counts of prime elements in a random short interval $\{ \alpha \in \mathcal{O}_K: \|m(\alpha) - x \| \leq H\}$, where $x$ is chosen randomly and uniformly from the set $\{x \in \mathbb{R}^n: \|x\| \leq X\}$, tend to a Poisson random variable for $H = \lambda \log X$ as $X \rightarrow \infty$ for any fixed value of $\lambda$.\footnote{In fact, Tsai and Zaharescu work with a slightly more general notion of `short intervals' in number fields than we make use of here.} This is an analogue of the result of Gallagher over the integers, though Tsai and Zaharescu's proof necessarily depends on quite different ideas than Gallagher's at some points.

Finally, we make a note of \cite[Thm 4.1]{TsZa} of Tsai and Zaharescu:
\begin{thm}
\label{thm:logsum}
For $K$ an algebraic number field and $\|\cdot\|$ a norm as above,
$$
\sum_{\substack{\|m(\alpha)\| \leq X \\ |N\alpha|> 1}} \frac{1}{\log^k |N(\alpha)|} \sim \frac{\vol(B_X^{\|\cdot\|})}{\log^k \vol(B_X^{\|\cdot\|})},
$$
where $\vol(B_X^{\|\cdot\|})$ is the volume in $\mathbb{R}^n$ of the set $B_X^{\|\cdot\|} = \{x \in \mathbb{R}^n: \|x\|\leq X\}$.
\end{thm}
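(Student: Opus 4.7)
The plan is to split the sum according to the size of $|N(\alpha)|$, exploiting that most lattice points $\alpha \in \mathcal{O}_K$ with $\|m(\alpha)\| \leq X$ have $|N(\alpha)|$ comparable to $X^n$, so that $\log|N(\alpha)| \sim n \log X \sim \log \vol(B_X^{\|\cdot\|})$, while the exceptional $\alpha$ of very small norm contribute negligibly. This exploits that the norm form $N$ is homogeneous of degree $n$ in the coordinates of $m$, and that $\vol(B_X^{\|\cdot\|}) = c X^n$ for some constant $c$, so $\log \vol(B_X^{\|\cdot\|}) = n \log X + O(1)$.

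Fixing a small $\delta > 0$, I would write $S(X) = S_1(X) + S_2(X)$, where $S_1$ restricts the sum to $|N(\alpha)| > X^{n-\delta}$ and $S_2$ to $1 < |N(\alpha)| \leq X^{n-\delta}$. On the $S_1$ range one has $|N(\alpha)| \ll X^n$ (since $|\sigma_i(\alpha)| \ll \|m(\alpha)\| \leq X$ for each archimedean embedding $\sigma_i$), hence $(n-\delta)\log X \leq \log|N(\alpha)| \leq n \log X + O(1)$, giving $1/\log^k|N(\alpha)| = (n \log X)^{-k}(1+O(\delta))$. Since $m(\mathcal{O}_K) = \mathbb{Z}^n$, the standard Lipschitz-boundary lattice count yields $\#\{\alpha : \|m(\alpha)\| \leq X\} = \vol(B_X^{\|\cdot\|}) + O(X^{n-1})$, and subtracting the $S_2$-count (shown below to be lower order) gives $S_1(X) = (1+O(\delta))\vol(B_X^{\|\cdot\|})/(n \log X)^k$. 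Since $(n \log X)^k$ and $(\log \vol(B_X^{\|\cdot\|}))^k$ agree up to a factor $1+O(1/\log X)$, letting $\delta \to 0$ after $X \to \infty$ produces the claimed asymptotic.

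The hard part is bounding $S_2(X)$. Since $|N(\alpha)| \in \mathbb{Z}$ and $|N(\alpha)| > 1$ force $\log|N(\alpha)| \geq \log 2$, it suffices to show
$$
\#\{\alpha \in \mathcal{O}_K : \|m(\alpha)\| \leq X,\ 1 \leq |N(\alpha)| \leq Y\} \ll Y (\log X)^{n-1},
$$
which, applied with $Y = X^{n-\delta}$, bounds $|S_2(X)|$ by $O(X^{n-\delta}(\log X)^{n-1}) = o(X^n/\log^k X)$. This two-part claim rests on: (a) the number of principal ideals $(\alpha_0) \subseteq \mathcal{O}_K$ of norm at most $Y$ is $O(Y)$, bounded by the standard count of all ideals of norm at most $Y$; and (b) for each such principal ideal, choosing a Minkowski-type generator $\alpha_0$ with $|\sigma_i(\alpha_0)| \ll Y^{1/n}$ at every archimedean embedding (which is possible by translating by a unit into a fundamental domain for the log-unit action), the associates $\alpha_0 u$ with $\|m(\alpha_0 u)\| \leq X$ correspond to units $u \in \mathcal{O}_K^\times$ whose log-embedding lies in a bounded box of diameter $O(\log X)$ in $\mathbb{R}^{r_1+r_2}$ (the upper bounds $\log|\sigma_i(u)| \leq \log X + O(\log Y)$ combined with the hyperplane constraint $\sum n_i \log|\sigma_i(u)| = 0$ force this), which contains $O((\log X)^{r_1+r_2-1}) \leq O((\log X)^{n-1})$ units by Dirichlet's unit theorem.
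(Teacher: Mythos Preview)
The paper does not give its own proof of this statement: it is quoted directly as \cite[Thm.~4.1]{TsZa} of Tsai--Zaharescu, with no argument supplied. So there is nothing in the paper to compare against.

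Your proposal is a correct self-contained proof. The decomposition into $S_1$ (norm $> X^{n-\delta}$) and $S_2$ (small norm) is the natural one, and the crucial estimate
\[
\#\{\alpha \in \mathcal{O}_K : \|m(\alpha)\| \leq X,\ 1 \leq |N(\alpha)| \leq Y\} \ll Y (\log X)^{r_1+r_2-1}
\]
is exactly what is needed. One small imprecision worth tightening: in step~(b) you only state the upper bound $|\sigma_i(\alpha_0)| \ll Y^{1/n}$, but to deduce $\log|\sigma_i(u)| \leq \log X + O(1)$ from $|\sigma_i(\alpha_0 u)| \ll X$ you also need the \emph{lower} bound $|\sigma_i(\alpha_0)| \gg 1$. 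This does follow from the fundamental-domain normalization you invoke (it gives $|\sigma_i(\alpha_0)| \asymp |N(\alpha_0)|^{1/n} \geq 2^{1/n}$, not merely $\ll Y^{1/n}$), so the argument goes through once you state both bounds. Also remember to absorb the finite torsion $w_K$ in $\mathcal{O}_K^\times$ into the implied constant when counting associates.
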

This can be used to simplify right hand side expressions of \eqref{generalPNT} and \eqref{GrossSmithNumberFields}.

\subsection{Main results}
\label{MainResults}

The main result proved in this paper is an estimate for sums of singular series in an algebraic number field. We use this result to heuristically derive a conjecture about the variance of primes in short intervals.

\begin{dfn}
\label{def:nice} 
A function $w: \mathbb{R}^n \rightarrow \mathbb{R}$ is called `nice' if $w$ has compact support and $\hat{w}(\xi) = O(1/|\xi|^{n+1})$.
\end{dfn}

A \emph{nice} function can thus be thought of as a function that is not too jagged. For instance, if $\mathcal{U}$ is a bounded open region of $\mathbb{R}^n$ with $C^\infty$-boundary and nonzero Gaussian curvature everywhere and $\mathbf{1}_\mathcal{U}$ is the indicator function of $\mathcal{U}$, then the convolution $\mathbf{1}_\mathcal{U}\ast\mathbf{1}_\mathcal{U}$ is nice \cite{Br,Sv}. 

Here and in what follows we use the convention that $\hat{w}(\xi) = \int_{\mathbb{R}^n} e(-x\cdot \xi) w(x)\, d^n x$, and $|\xi|$ is the Euclidean norm for $\xi \in \mathbb{R}^n$.

\begin{thm}
\label{thm:SingularSeriesGeneralSum}
For $K$ an algebraic number field with $[K:\mathbb{Q}] = n$, with an embedding $m: K \rightarrow \mathbb{R}^n$ as in section \ref{BackgroundNumberFields} and nice $w:\mathbb{R}^n\rightarrow\mathbb{R}$,
$$
\sum_{\substack{\eta \neq 0 \\ \eta \in \mathcal{O}_K}} (\mathfrak{S}(\eta)-1) w\left(\frac{m(\eta)}{H}\right) \sim - w(0) \mathfrak{R}_K \log(H^n).
$$
\end{thm}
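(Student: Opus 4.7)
The plan is to adapt Montgomery's argument for Theorem~\ref{thm:SingularSeriesIntegerSum} by (i) expanding $\mathfrak{S}(\eta)$ as a sum over squarefree ideals using Ramanujan sums, (ii) applying Poisson summation to the resulting lattice sums in $\mathbb{R}^n$, and (iii) evaluating a Mertens-type average over ideals of $\mathcal{O}_K$. Let $\Phi(\mathfrak{q}) = N\mathfrak{q}\prod_{\mathfrak{p}\mid\mathfrak{q}}(1-1/N\mathfrak{p})$ denote the ideal-theoretic totient and $c_\mathfrak{q}(\eta) = \sum_{\mathfrak{d}\mid\gcd(\mathfrak{q},(\eta))}\mu(\mathfrak{q}/\mathfrak{d})N\mathfrak{d}$ the corresponding Ramanujan sum. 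A prime-by-prime verification, using $c_\mathfrak{p}(\eta) = N\mathfrak{p}-1$ when $\eta\in\mathfrak{p}$ and $-1$ otherwise, shows that
$$
\mathfrak{S}(\eta)-1 = \sum_{\substack{\mathfrak{q}\text{ squarefree}\\ \mathfrak{q}\neq\mathcal{O}_K}}\frac{c_\mathfrak{q}(\eta)}{\Phi(\mathfrak{q})^2}.
$$

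Swapping summation and unfolding $c_\mathfrak{q}$ converts the target sum into
$$
\sum_{\mathfrak{q}\neq\mathcal{O}_K}\frac{\mu(\mathfrak{q})^2}{\Phi(\mathfrak{q})^2}\sum_{\mathfrak{d}\mid\mathfrak{q}}\mu(\mathfrak{q}/\mathfrak{d})N\mathfrak{d}\Bigl(\sum_{\eta\in\mathfrak{d}}w(m(\eta)/H)-w(0)\Bigr).
$$
I will treat the inner lattice sum by Poisson summation: writing $V_0=\mathrm{covol}(m(\mathcal{O}_K))$, the sublattice $m(\mathfrak{d})$ has covolume $V_0 N\mathfrak{d}$ and
$$
\sum_{\eta\in\mathfrak{d}}w(m(\eta)/H) = \frac{H^n}{V_0 N\mathfrak{d}}\sum_{\xi\in m(\mathfrak{d})^*}\hat w(H\xi).
$$
The $\xi=0$ frequency contributes $\hat w(0)H^n/V_0$, independent of $\mathfrak{d}$, and is annihilated by $\sum_{\mathfrak{d}\mid\mathfrak{q}}\mu(\mathfrak{q}/\mathfrak{d})=0$ for $\mathfrak{q}\neq\mathcal{O}_K$. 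The origin correction $-w(0)$ contracted against $\sum_{\mathfrak{d}\mid\mathfrak{q}}\mu(\mathfrak{q}/\mathfrak{d})N\mathfrak{d}=\Phi(\mathfrak{q})$ yields a formally divergent contribution $-w(0)\sum_\mathfrak{q}\mu(\mathfrak{q})^2/\Phi(\mathfrak{q})$. The nonzero frequencies are what supply the effective cutoff: the decay $|\hat w(\xi)|\ll|\xi|^{-n-1}$, together with $\mathrm{covol}(m(\mathfrak{d})^*)=(V_0 N\mathfrak{d})^{-1}$, makes their contribution negligible when $N\mathfrak{q}\ll H^n$ and forces cancellation with the divergent piece once $N\mathfrak{q}\gtrsim H^n$.

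The final ingredient is a Mertens-type estimate
$$
\sum_{\substack{\mathfrak{q}\text{ squarefree}\\ N\mathfrak{q}\leq Q}}\frac{\mu(\mathfrak{q})^2}{\Phi(\mathfrak{q})}\sim\mathfrak{R}_K\log Q,
$$
which I derive by writing $D(s):=\sum_\mathfrak{q}\mu(\mathfrak{q})^2\Phi(\mathfrak{q})^{-1}N\mathfrak{q}^{-s}=\zeta_K(s+1)E(s)$ with $E$ an Euler product whose local factor at $s=0$ equals $(1+\tfrac{1}{N\mathfrak{p}-1})(1-\tfrac{1}{N\mathfrak{p}})=1$, so $E(0)=1$ and $E$ is absolutely convergent in a neighborhood of $s=0$. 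Since $\zeta_K(s+1)\sim\mathfrak{R}_K/s$ near $s=0$, it follows that $D(s)\sim\mathfrak{R}_K/s$ there, and a Perron or Tauberian argument yields the partial-sum asymptotic. Taking the effective cutoff at $Q=H^n$ produces the leading term $-w(0)\mathfrak{R}_K\log(H^n)$. The main technical obstacle I anticipate is a rigorous implementation of this effective cutoff: one must balance the divergent $-w(0)/\Phi(\mathfrak{q})$ contributions against the $\xi\neq 0$ Poisson corrections uniformly across $\mathfrak{q}$, most likely through a smooth dyadic partition of the $\mathfrak{q}$-sum paired with the bound $|c_\mathfrak{q}(\eta)|\leq\Phi(\gcd(\mathfrak{q},(\eta)))$ for the largest ideals.
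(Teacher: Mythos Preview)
Your proposal follows essentially the same route as the paper: expand $\mathfrak{S}(\eta)-1$ into Ramanujan sums over squarefree ideals, unfold $c_\mathfrak{q}$ via M\"obius inversion into sums over sublattices $m(\mathfrak{d})\subset\mathbb{Z}^n$, apply Poisson summation, and finish with the Mertens-type asymptotic $\sum_{N\mathfrak{q}\leq Q}\mu(\mathfrak{q})^2/\Phi(\mathfrak{q})\sim\mathfrak{R}_K\log Q$ (which the paper proves exactly as you outline, via $D(s)=\zeta_K(s+1)E(s)$ with $E(0)=1$). The structure, and in particular the recognition that the $\xi=0$ frequency is killed by $\sum_{\mathfrak{d}\mid\mathfrak{q}}\mu(\mathfrak{q}/\mathfrak{d})=0$ while the $-w(0)$ piece produces the divergent $\sum\mu(\mathfrak{q})^2/\Phi(\mathfrak{q})$, matches the paper exactly.

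The one substantive point you have not pinned down is the mechanism for the ``effective cutoff'' you describe, and here the paper's device differs from what you suggest. Rather than a smooth dyadic partition of the $\mathfrak{q}$-sum together with the pointwise bound $|c_\mathfrak{q}(\eta)|\leq\Phi(\gcd(\mathfrak{q},(\eta)))$, the paper exploits a structural fact about \emph{ideal} lattices: for any ideal $\mathfrak{b}\subset\mathcal{O}_K$, the dual lattice $m(\mathfrak{b})^*$ contains no nonzero vector of length $\ll (N\mathfrak{b})^{-1/n}$ (equivalently, ideal lattices admit an integral basis with all basis vectors of size $\ll (N\mathfrak{b})^{1/n}$). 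This gives a clean dichotomy: for $N\mathfrak{b}\geq CH^n$ the sum $\sum_{\eta\in\mathfrak{b}}w(m(\eta)/H)$ equals $w(0)$ \emph{exactly}, since $w$ is compactly supported and the only lattice point in its support is the origin; for $N\mathfrak{b}<CH^n$ the nonzero Poisson frequencies contribute $O((N\mathfrak{b})^{1/n}/H)$ uniformly. The large-$\mathfrak{b}$ case then produces the term $\phi(\mathfrak{q})w(0)$ which cancels the subtracted $-\phi(\mathfrak{q})w(0)$ on the nose, and the remaining error terms are handled by a handful of convergent sums over ideals. Your pointwise Ramanujan-sum bound would not by itself supply this uniformity---the content here is specific to lattices coming from ideals, not arbitrary sublattices of $\mathbb{Z}^n$---so this ``not too skew'' input is the missing ingredient in your sketch. (Note also that in the paper's coordinates $m(\mathcal{O}_K)=\mathbb{Z}^n$, so your $V_0=1$.)
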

As before $\mathfrak{R}_K = \Res_{s=1} \zeta_K(s)$.

We remark that probably Theorem \ref{thm:SingularSeriesGeneralSum} could be proved with more work for even a slightly wider class of test functions -- for instance we suspect Theorem \ref{thm:SingularSeriesGeneralSum} is true for $w = \mathbf{1}_\mathcal{U}\ast\mathbf{1}_\mathcal{U}$ for \emph{any} convex region $\mathcal{U}$ of $\mathbb{R}^n$.

The reader should take a moment to note that in any case this recovers Theorem \ref{thm:SingularSeriesIntegerSum} for $K=\mathbb{Q}$, using $w(x) = (1-|x|)_+$. The proof of Theorem \ref{thm:SingularSeriesGeneralSum} nonetheless is rather different from that given in \cite{MoSo1} for Theorem \ref{thm:SingularSeriesIntegerSum}. Rather than exploit the multiplicativity of $\mathfrak{S}(\eta)$ (a notion that makes sense only when $\mathcal{O}_K$ is a principal ideal domain), we expand $\mathfrak{S}(\eta)$ into Ramanujan sums and apply Fourier analysis. Such an approach was used in the second author's thesis \cite{Ro} to reprove Theorem \ref{thm:SingularSeriesIntegerSum}. New ideas are required in the passage to general number fields. 

We note that as a consequence of results proved by Tsai and Zaharescu (in particular see \cite[Thm. 1.3]{TsZa}) one may see that
$$
\sum_{\substack{\eta \neq 0 \\ \eta \in \mathcal{O}_K}} (\mathfrak{S}(\eta)-1) w\left(\frac{m(\eta)}{H}\right) = o(H^n).
$$
Our new contribution in this paper is a method to asymptotically identify the second order term of size $\log H$ above.

It is likely that a still lower-order constant term, depending upon the number field, can be identified in Theorem \ref{thm:SingularSeriesGeneralSum}, giving a formula with error $o(1)$. (Such a formula is found, for instance, over $\mathbb{Z}$ in \cite{MoSo1}.) We have not pursued these constant terms in this paper however.

We  use this computation of sums of singular series to provide heuristic evidence for a generalization of Conjecture \ref{conj:primevariance} on counts of primes in short intervals. We make use of the following set up, for a number field $K$, and an embedding $m:K \rightarrow \mathbb{R}^n$ and a norm $\|\cdot \|$ on $\mathbb{R}^n$ as before.

\begin{itemize}
\item Define
$$
\pi_K(x;H):= \# \{ \omega \in \mathcal{O}_K \; \textrm{prime}:\; \|m(\omega)-x\| \leq H\}.
$$
as a count of primes in a `short interval' around $x$.

\item Define the expectation of this quantity by
$$
E_K(X;H) := \frac{1}{\vol\left(B_X^{\|\cdot\|}\right)} \int_{\|x\|\leq X} \pi_K(x;H)\, d^nx,
$$
where, again, $\vol(B_X^{\|\cdot\|})$ is the volume in $\mathbb{R}^n$ of the set $B_X^{\|\cdot\|} = \{x\in\mathbb{R}^n:\, \|x\|\leq X\}$.

\item Note that Mitsui's generalized prime number theorem suggests $$\frac{1}{\mathfrak{R}_K}\sum_{\substack{\|m(\alpha)-x\|\leq H\\ |N(\alpha)| > 1}} \frac{1}{\log |N(\alpha)|}$$ is a good approximation for $\pi_K(x;H)$. Consider
$$
\widetilde{\pi}_K(x;H):= \pi_K(x;H) - \frac{1}{\mathfrak{R}_K}\sum_{\substack{\|m(\alpha)-x\|\leq H \\ |N(\alpha)| > 1}} 1/\log |N(\alpha)|,
$$
which keeps track of the deviation of $\pi_K(x;H)$ from this approximation.

\item Define 
$$
V_K(X;H) := \frac{1}{\vol\left(B_X^{\|\cdot\|}\right)} \int_{\|x\|\leq X} \widetilde{\pi}_K(x;H)^2 d^nx.
$$
As for the integers, this quantity is not exactly the probabilistic variance of the counts $\pi_K(x;H)$, but it should be thought of as being analogous.
\end{itemize}

For expectation it is easy to show from Mitsui's generalized prime number theorem and Theorem \ref{thm:logsum} that
\begin{prop}
\label{prop:ExpectedPrimes}
For $0 \leq H \leq X$,
$$
E_K(X;H) \sim \frac{\vol(B_H^{\|\cdot\|})}{\mathfrak{R}_K \log\,\vol(B_X^{\|\cdot\|})}.
$$
\end{prop}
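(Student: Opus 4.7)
The natural plan is to apply Fubini and then reduce the double integral to a direct prime count. Concretely, by interchanging the sum over primes with the integral over $x$, one finds
$$
\int_{\|x\|\leq X} \pi_K(x;H)\, d^n x = \sum_{\omega\in\mathcal{O}_K\,\textrm{prime}} \vol\bigl(B_X^{\|\cdot\|} \cap B_H^{\|\cdot\|}(m(\omega))\bigr),
$$
where $B_H^{\|\cdot\|}(y) := \{x : \|x-y\| \leq H\}$. The triangle inequality (valid since $\|\cdot\|$ is a norm) shows that $B_H^{\|\cdot\|}(m(\omega)) \subseteq B_X^{\|\cdot\|}$ whenever $\|m(\omega)\| \leq X - H$, and that the intersection is empty when $\|m(\omega)\| > X + H$.

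I would then split the sum into three ranges according to $\|m(\omega)\|$: the interior part $\|m(\omega)\| \leq X - H$, the boundary annulus $X - H < \|m(\omega)\| \leq X + H$, and the exterior. The interior contributes exactly
$$
\vol(B_H^{\|\cdot\|}) \cdot \#\{\omega\,\textrm{prime} : \|m(\omega)\| \leq X - H\},
$$
and the exterior contributes nothing. The annular contribution is bounded above by $\vol(B_H^{\|\cdot\|})$ times the number of primes with $X - H < \|m(\omega)\| \leq X + H$.

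Next I would invoke Mitsui's theorem (Theorem~\ref{thm:Mitsui}) together with Theorem~\ref{thm:logsum}, which combine to give
$$
\#\{\omega\,\textrm{prime} : \|m(\omega)\| \leq R\} \sim \frac{\vol(B_R^{\|\cdot\|})}{\mathfrak{R}_K \log \vol(B_R^{\|\cdot\|})}
$$
as $R \to \infty$. Applied to $R = X - H$, this produces the claimed main term after dividing by $\vol(B_X^{\|\cdot\|})$ and using that $\vol(B_{X-H}^{\|\cdot\|})/\vol(B_X^{\|\cdot\|}) \to 1$ and $\log \vol(B_{X-H}^{\|\cdot\|}) \sim \log \vol(B_X^{\|\cdot\|})$ in the relevant asymptotic regime. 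Applied to the annulus, the same estimate bounds the boundary contribution by a lower-order quantity.

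The main obstacle, and the only subtle point, is the boundary annulus: one must check that the count of primes in the shell $X - H < \|m(\omega)\| \leq X + H$ is genuinely of lower order than the volume of $B_X^{\|\cdot\|}$ divided by $\log \vol(B_X^{\|\cdot\|})$. This is straightforward when $H/X$ is bounded away from $1$, since then the shell has smaller volume than $B_X^{\|\cdot\|}$ and Mitsui's theorem applied to both $B_{X+H}^{\|\cdot\|}$ and $B_{X-H}^{\|\cdot\|}$ gives the bound. In particular, the proof is most cleanly carried out in a regime where $H = o(X)$ (or more generally, where the normalized boundary shell has vanishing relative volume), which is sufficient for the heuristic applications that follow in the paper.
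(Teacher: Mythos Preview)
Your approach is exactly what the paper has in mind: the paper does not give a proof of this proposition, but simply asserts that it ``is easy to show from Mitsui's generalized prime number theorem and Theorem~\ref{thm:logsum}'', and your Fubini-plus-sandwich argument is the standard way to make that precise. You correctly identify the tools and the only genuinely nontrivial point (the boundary annulus), and your argument is complete and clean in the regime $H = o(X)$, which covers the paper's applications where $H = X^\delta$ with $\delta < 1$.

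One remark worth making explicit: the proposition as stated claims uniformity over the full range $0 \le H \le X$, including $H$ comparable to $X$. Your sandwich bounds
\[
\vol(B_H^{\|\cdot\|})\,\frac{\pi_K(X-H)}{\vol(B_X^{\|\cdot\|})} \;\le\; E_K(X;H) \;\le\; \vol(B_H^{\|\cdot\|})\,\frac{\pi_K(X+H)}{\vol(B_X^{\|\cdot\|})}
\]
(where $\pi_K(R)$ counts primes with $\|m(\omega)\|\le R$) do not pinch to the same limit when $H/X$ is bounded away from $0$, since then $\vol(B_{X\pm H}^{\|\cdot\|})/\vol(B_X^{\|\cdot\|}) = (1\pm H/X)^n$ differ by a constant factor. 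To cover that range as well, one replaces the crude annulus bound by a layer-cake or partial-summation argument: the weight $y\mapsto \vol(B_X^{\|\cdot\|}\cap B_H^{\|\cdot\|}(y))$ is supported in $B_{X+H}^{\|\cdot\|}$ and has total integral $\vol(B_X^{\|\cdot\|})\vol(B_H^{\|\cdot\|})$, and Mitsui's theorem applied to superlevel sets of this weight (or simply to a sequence of nested balls) recovers the asymptotic uniformly. This is a routine refinement and does not change your overall strategy.
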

Note that in this formula,
\begin{equation}
\label{exp_asymp}
\log\, \vol(B_X^{\|\cdot\|}) \sim \log(X^n),
\end{equation} 
but the other terms depend more delicately on the number field $K$ and the norm $\|\cdot\|$.

By contrast, we conjecture that the variance enjoys a simple and universal relation to expectation,
\begin{conj}
\label{conj:primevarianceNumberFields}
For any algebraic number field $K$ with $[K:\mathbb{Q}] = n$, with an embedding $m:\,K\rightarrow\mathbb{R}^n$ and norm $\|\cdot\|$ as above, for $\delta \in (0,1)$ and $H = X^\delta$, we have
$$
V_K(X;H) \sim (1-\delta)\, E_K(X;H).
$$
\end{conj}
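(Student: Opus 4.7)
The plan is to generalize the derivation of Conjecture \ref{conj:primevariance} from Conjecture \ref{conj:HardyLittlewoodIntegers} and Theorem \ref{thm:SingularSeriesIntegerSum} sketched in \cite{MoSo1}. Writing $\pi_K(x;H)=\sum_{\omega\text{ prime}}\mathbf{1}\{\|m(\omega)-x\|\leq H\}$, letting $A(x;H):=\frac{1}{\mathfrak{R}_K}\sum_{|N\alpha|>1}\mathbf{1}\{\|m(\alpha)-x\|\leq H\}/\log|N\alpha|$, and letting $W(y):=\int\mathbf{1}\{\|z\|\leq H\}\mathbf{1}\{\|z-y\|\leq H\}\,d^nz$ be the self-convolution of the ball indicator (so that $W(y)=H^n w_0^*(y/H)$ for $w_0^*$ the self-convolution of the indicator of the unit $\|\cdot\|$-ball), I would expand
\[
V_K(X;H)\cdot\vol(B_X^{\|\cdot\|})=\int_{\|x\|\leq X}\widetilde{\pi}_K(x;H)^2\,d^nx
\]
and interchange summations with the integration. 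This organizes the result into a \emph{diagonal} contribution (coincident indices) plus \emph{off-diagonal} contributions of the shape $\sum_{\eta\neq 0}W(m(\eta))\times(\text{count})$.

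The diagonal of $\int\pi_K^2$ equals $W(0)=\vol(B_H^{\|\cdot\|})$ times $\#\{\omega\text{ prime}:\|m(\omega)\|\leq X+O(H)\}$, and Theorems \ref{thm:Mitsui} and \ref{thm:logsum} make this asymptotic to $E_K(X;H)\vol(B_X^{\|\cdot\|})$; the diagonals contributed by $-2\int\pi_K A+\int A^2$ are of strictly smaller order. The off-diagonal of $\int\pi_K^2$ is, by Conjecture \ref{conj:GrossSmithNumberFields},
\[
\sum_{\omega_1\neq\omega_2\text{ prime, in range}}W(m(\omega_1-\omega_2))\;\sim\;\frac{S_X}{\mathfrak{R}_K^2}\sum_{\eta\neq 0}\mathfrak{S}(\eta)\,W(m(\eta)),
\]
where $S_X:=\sum_{|N\alpha|>1,\,\|m(\alpha)\|\leq X}\log^{-2}|N\alpha|\sim\vol(B_X^{\|\cdot\|})/(n\log X)^2$ by Theorem \ref{thm:logsum}. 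Splitting $\mathfrak{S}(\eta)=1+(\mathfrak{S}(\eta)-1)$, the ``$1$'' piece should cancel at leading order against the off-diagonal contributions of $-2\int\pi_K A+\int A^2$: both sides are, via Riemann-summation of the nice function $W$, asymptotically $\pm\vol(B_H^{\|\cdot\|})^2 S_X/\mathfrak{R}_K^2$ with matching signs. This is the Cram\'er cancellation step, expressing that $A$ is the right deterministic approximation to $\pi_K$ at the level of second moments.

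The remaining contribution is $(S_X/\mathfrak{R}_K^2)\sum_{\eta\neq 0}(\mathfrak{S}(\eta)-1)W(m(\eta))$, and here I apply Theorem \ref{thm:SingularSeriesGeneralSum} to $w_0^*$: by the rescaling $W(m(\eta))=H^n w_0^*(m(\eta)/H)$,
\[
\sum_{\eta\neq 0}(\mathfrak{S}(\eta)-1)W(m(\eta))\;\sim\;-\vol(B_H^{\|\cdot\|})\,\mathfrak{R}_K\,n\log H.
\]
Combining with the diagonal $\sim E_K(X;H)\vol(B_X^{\|\cdot\|})$, dividing by $\vol(B_X^{\|\cdot\|})$, and using $\log\vol(B_X^{\|\cdot\|})\sim n\log X$ and $H=X^\delta$, one obtains
\[
V_K(X;H)\;\sim\;E_K(X;H)-E_K(X;H)\cdot\frac{\log H}{\log X}=(1-\delta)E_K(X;H).
\]

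The hard part is the Cram\'er cancellation: as over $\mathbb{Z}$ in \cite{MoSo1}, it requires careful bookkeeping of Riemann-sum errors in $-2\int\pi_K A+\int A^2$ against the ``$1$'' piece of the Gross--Smith prediction, together with uniform (in $\eta$) control of the error term in Conjecture \ref{conj:GrossSmithNumberFields}. A secondary difficulty is the hypothesis of Theorem \ref{thm:SingularSeriesGeneralSum} that $w$ be \emph{nice}: one must either restrict to norms whose unit ball is $C^\infty$ with nonzero Gaussian curvature (so that $w_0^*$ is nice by \cite{Br,Sv}), or else establish the extension of Theorem \ref{thm:SingularSeriesGeneralSum} to self-convolutions of indicators of arbitrary convex regions, as flagged in the remark after that theorem.
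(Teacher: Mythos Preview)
Your heuristic derivation follows essentially the same route as the paper's Section~\ref{PrimesHeuristicDerivation}: expand the second moment, separate diagonal from off-diagonal, invoke Conjecture~\ref{conj:GrossSmithNumberFields} for the off-diagonal, split $\mathfrak{S}(\eta)=1+(\mathfrak{S}(\eta)-1)$, cancel the ``$1$'' piece against the Cram\'er-model prediction, and evaluate the remainder with Theorem~\ref{thm:SingularSeriesGeneralSum}. The only organizational difference is that you expand $\widetilde{\pi}_K^2=(\pi_K-A)^2$ in full and carry the cross terms $-2\int\pi_K A+\int A^2$ explicitly, whereas the paper shortcuts to $V_K\approx\vol(B_X^{\|\cdot\|})^{-1}\int\pi_K^2-E_K(X;H)^2$ at the outset; at the heuristic level these are the same, and you correctly flag both caveats the paper raises (uniformity in $\eta$ for Gross--Smith, and the \emph{nice}-function hypothesis on $w_0^*$).
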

We note that the parameter $\delta$ may be thought of somewhat more intrinsically as $\delta = (\log\, \vol(B_H^{\|\cdot\|}))/(\log\, \vol(B_X^{\|\cdot\|}))$.

A Cram\'er model for the distribution of primes elements in a number field would predict a statement of this sort with $(1-\delta)$ replaced by just $1$. Tsai and Zaharescu showed, conditionally on the Hardy-Littlewood type conjectures of Gross-Smith, that in short intervals with $H \approx \log X$, a Cram\'er model should accurately predict the distribution of $\pi_K(x;H)$. Conjecture \ref{conj:primevarianceNumberFields} suggests that for larger $H$ this is no longer the case. Moreover, a little curiously, this conjecture suggests that the variance deviates from the Cram\'er model prediction by the same universal factor $(1-\delta)$ no matter the number field $K$ one works in. We do not have a really simple conceptual explanation for this phenomenon except that it comes out of the computations to follow. Using Theorem \ref{thm:SingularSeriesGeneralSum} we give a heuristic derivation of Conjecture \ref{conj:primevarianceNumberFields} in section \ref{PrimesHeuristicDerivation}.

\subsection{Function fields}
We have mentioned that we were motivated to investigate Conjecture \ref{conj:primevarianceNumberFields} by work of Keating and Rudnick \cite{KeRu}, who proved a different analogue of Conjecture \ref{conj:GoldstonMontgomery} (or equivalently Conjecture \ref{conj:primevariance}) by replacing the integers with the ring of polynomials with coefficients in the finite field $\mathbb{F}_q$ and taking a large $q$ limit. It would be very interesting to see if a generalization of Keating and Rudnick's theorem, along the lines of Conjecture \ref{conj:primevarianceNumberFields}, can also be proved by considering function fields more general than the rational function field and making use of a suitable notion of a short interval in this setting.

\subsection{Acknowledgments}
\label{acknowledgements}

The second author received partial support from NSF grants DMS-1701577 and DMS-1854398 and an NSERC grant. We thank Ofir Gorodetsky, Robert Lemke Oliver, Jeff Lagarias, Jennifer Park, and Paul Pollack for helpful discussions.

\section{Some preliminary estimates in algebraic number theory}
\label{Preliminary}

\subsection{Ramanujan sums and other arithmetic functions in $\mathcal{O}_K$}
\label{RamanujanSum}
We will require some number field variants of some well known arithmetic functions. For ideals $\mathfrak{a}$ and $\mathfrak{c}$ we use the notation $\mathfrak{a}  | \mathfrak{c}$ as usual to mean $\mathfrak{c} \subseteq \mathfrak{a}$. Note that $\mathfrak{a} | \mathfrak{c}$ if and only if there exists an ideal $\mathfrak{b}$ such that $\mathfrak{ab} = \mathfrak{c}.$

For $K$ an algebraic number field and $\mathfrak{q}$ an ideal of $\mathcal{O}_K$ with prime factorization $\mathfrak{q} = \mathfrak{p}_1^{e_1}\cdots \mathfrak{p}_k^{e_k}$, define the M\"obius function on ideals by
$$
\mu(\mathfrak{q}) := \begin{cases} (-1)^k & \textrm{if}\; e_1 = \cdots = e_k = 1 \\ 0 & \; \textrm{if}\; e_i \geq 2\; \textrm{for some}\; i, \end{cases}
$$
and the totient function by
$$
\phi(\mathfrak{q}) = N\mathfrak{q} \prod_{\mathfrak{p} | \mathfrak{q}} \left(1-\frac{1}{N\mathfrak{p}}\right).
$$ 

Likewise for $\eta \in \mathcal{O}_K$ and $\mathfrak{q}$ still an ideal, define a \emph{Ramanujan sum} by
\begin{equation}
\label{Ramanujan_def}
c_\mathfrak{q}(\eta):= \sum_{\substack{\mathfrak{ab} = \mathfrak{q} \\ \eta \in \mathfrak{b}}} \mu(\mathfrak{a}) N\mathfrak{b}.
\end{equation}
As for the integers from M\"obius inversion for ideals $\mathfrak{c}$ and arbitrary $\eta$,
\begin{equation}
\label{Ramanujan_condense}
\sum_{\mathfrak{q}|\mathfrak{c}} c_{\mathfrak{q}}(\eta) = \begin{cases} N\mathfrak{c} & \textrm{if}\; \eta \in \mathfrak{c} \\
0 & \textrm{if}\; \eta \notin \mathfrak{c}.\end{cases}
\end{equation}

Note that for fixed $\eta$ the right hand side of \eqref{Ramanujan_condense} is multiplicative in ideals $\mathfrak{c}$; it follows as for the integers that $c_\mathfrak{q}(\eta)$ is multiplicative in $\mathfrak{q}$. More developed accounts of Ramanujan sums in algebraic number fields can be found in \cite{Ra,Ri}.

We make use of Ramanujan sums because, as over the integers, singular series such as $\mathfrak{S}(\eta)$ can be expressed elegantly in terms of them. In particular,
\begin{lem}
\label{lem:SingularToRamanujan}
For nonzero $\eta \in \mathcal{O}_K$, 
$$
\mathfrak{S}(\eta) = \sum_{\mathfrak{q}\neq 0} \frac{\mu(\mathfrak{q})^2}{\phi(\mathfrak{q})^2} c_\mathfrak{q}(\eta),
$$
where the sum is over all ideals $\mathfrak{q}$ of $\mathcal{O}_K$.
\end{lem}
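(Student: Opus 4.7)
The plan is to prove the identity prime-by-prime, matching Euler factors on the two sides. First I would observe that the summand $f(\mathfrak{q}) := \mu(\mathfrak{q})^2 c_{\mathfrak{q}}(\eta)/\phi(\mathfrak{q})^2$ is multiplicative in $\mathfrak{q}$: $\mu^2$ and $\phi$ are multiplicative by standard facts, and $c_\mathfrak{q}(\eta)$ was noted to be multiplicative right after \eqref{Ramanujan_condense}. Moreover $f(\mathfrak{q})=0$ unless $\mathfrak{q}$ is squarefree. Hence, provided absolute convergence can be established, the sum on the right-hand side factors as an Euler product $\prod_{\mathfrak{p}}(1 + c_{\mathfrak{p}}(\eta)/\phi(\mathfrak{p})^2)$ over all prime ideals $\mathfrak{p}$ of $\mathcal{O}_K$, which I will then compare term-by-term with $\mathfrak{S}(\eta)$.

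Next, I would compute $c_\mathfrak{p}(\eta)$ directly from \eqref{Ramanujan_def}. Since $\mathfrak{p}$ is prime, the only factorizations $\mathfrak{a}\mathfrak{b} = \mathfrak{p}$ are $(\mathfrak{a},\mathfrak{b}) = ((1),\mathfrak{p})$ and $(\mathfrak{p},(1))$. The second pair always contributes $\mu(\mathfrak{p}) = -1$, while the first contributes $N\mathfrak{p}$ exactly when $\eta \in \mathfrak{p}$. Thus
$$
c_\mathfrak{p}(\eta) = \begin{cases} N\mathfrak{p}-1 = \phi(\mathfrak{p}) & \text{if } \eta \in \mathfrak{p}, \\ -1 & \text{if } \eta \notin \mathfrak{p}.\end{cases}
$$
Plugging this into the local Euler factor:
$$
1 + \frac{c_\mathfrak{p}(\eta)}{\phi(\mathfrak{p})^2} = \begin{cases} \dfrac{N\mathfrak{p}}{N\mathfrak{p}-1} = \dfrac{1 - 1/N\mathfrak{p}}{(1-1/N\mathfrak{p})^2}, & \eta\in\mathfrak{p}, \\[4pt] \dfrac{N\mathfrak{p}(N\mathfrak{p}-2)}{(N\mathfrak{p}-1)^2} = \dfrac{1-2/N\mathfrak{p}}{(1-1/N\mathfrak{p})^2}, & \eta\notin\mathfrak{p},\end{cases}
$$
which in both cases is exactly the local factor of $\mathfrak{S}(\eta)$ read off from \eqref{singularseriesNumberFields}. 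Taking the product over all primes gives the identity.

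It remains to verify absolute convergence so the Euler factorisation is legitimate. For $\mathfrak{q}$ squarefree write $\mathfrak{q}=\mathfrak{q}_1\mathfrak{q}_2$ with $\mathfrak{q}_1 = \gcd(\mathfrak{q},(\eta))$ and $\gcd(\mathfrak{q}_2,(\eta)) = (1)$; by multiplicativity and the case computation above, $|c_\mathfrak{q}(\eta)| = \phi(\mathfrak{q}_1)$, so
$$
\sum_{\mathfrak{q}\neq 0} \frac{\mu(\mathfrak{q})^2 |c_\mathfrak{q}(\eta)|}{\phi(\mathfrak{q})^2} = \sum_{\mathfrak{q}_1 \mid (\eta)} \frac{1}{\phi(\mathfrak{q}_1)} \sum_{\substack{\mathfrak{q}_2 \text{ sqfree}\\ \gcd(\mathfrak{q}_2,(\eta))=(1)}} \frac{1}{\phi(\mathfrak{q}_2)^2}.
$$
The outer sum is finite (only finitely many $\mathfrak{q}_1$ divide $(\eta)$), and the inner sum converges by comparison with $\zeta_K(2)$, since $\phi(\mathfrak{q}_2) \gg N\mathfrak{q}_2 / \log\log N\mathfrak{q}_2$ and $\sum_\mathfrak{a} 1/N\mathfrak{a}^{2-\varepsilon}$ converges.

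The proof is essentially mechanical; the only mild point of care is the absolute-convergence check that licenses the passage from the sum to the Euler product, and an honest verification that the two local factors really agree in both the $\eta\in\mathfrak{p}$ and $\eta\notin\mathfrak{p}$ cases. I do not anticipate any deeper obstacle.
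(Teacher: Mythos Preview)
Your proof is correct and follows essentially the same approach as the paper: compute $c_{\mathfrak{p}}(\eta)$ explicitly, match the local factor $1+c_{\mathfrak{p}}(\eta)/\phi(\mathfrak{p})^2$ with the Euler factor of $\mathfrak{S}(\eta)$, and expand the product using multiplicativity. Your absolute-convergence check is more detailed than necessary (the paper simply notes that $\eta\in\mathfrak{p}$ for only finitely many $\mathfrak{p}$, so almost all local factors are $1+O(1/N\mathfrak{p}^2)$), but it is correct.
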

\begin{proof}
The proof is the same as for the integers. The reader may verify that
$$
c_\mathfrak{p}(\eta) = \begin{cases} N\mathfrak{p} - 1 & \textrm{if}\; \eta \in \mathfrak{p} \\ -1 & \textrm{if}\; \eta \notin \mathfrak{p}\end{cases},
$$
and from this the Euler product \eqref{singularseriesNumberFields} can be written
$$
\mathfrak{S}(\eta) = \prod_\mathfrak{p} \left(1 + \frac{1}{\phi(\mathfrak{p})^2}c_\mathfrak{p}(\eta)\right).
$$
The lemma then follows by expanding the Euler product using the multiplicativity of the functions $\phi(\mathfrak{q})$ and $c_\mathfrak{q}(\eta)$. (Since $\eta \in \mathfrak{p}$ for only finitely many primes, there are no concerns about convergence.)
\end{proof}

We will later need to make recourse of the following estimates, whose proofs are exercises using Perron's formula and analytic properties of the zeta functions $\zeta_K(s)$. The reader may choose to skip these estimates for the moment, coming back to verify them when needed.

\begin{lem}
\label{lem:complicated_sums}
For $[K:\mathbb{Q}] = n$ we have as $Y\rightarrow\infty$,
\begin{equation}
\label{Asymptotic_sum}
\sum_{N\mathfrak{q} \leq Y} \frac{\mu^{2}(\mathfrak{q})}{\phi(\mathfrak{q})} = \mathfrak{R}_K \log(Y) + O(1),
\end{equation}
where $\mathfrak{R}_K:= \Res_{s=1}\zeta_K(s)$. 

Moreover,
\begin{equation}
\label{Sum_a}
\frac{1}{Y^{1/n}} \sum_{N\mathfrak{q} \leq Y} \frac{\mu(\mathfrak{q})^2}{\phi(\mathfrak{q})^2} (N\mathfrak{q})^{1+1/n} = O(1),
\end{equation}
\begin{equation}
\label{Sum_b}
\sum_{N\mathfrak{q} \geq Y} \frac{\mu(\mathfrak{q})^2}{\phi(\mathfrak{q})^2} \sum_{\substack{\mathfrak{b}| \mathfrak{q} \\ N\mathfrak{b} \leq Y}} N\mathfrak{b} = O(1),
\end{equation}
\begin{equation}
\label{Sum_c}
Y \sum_{N\mathfrak{q} \geq Y} \frac{\mu(\mathfrak{q})^2}{\phi(\mathfrak{q})^2} \sum_{\substack{\mathfrak{b} | \mathfrak{q} \\ N\mathfrak{b} \geq Y}} 1 = O(1),
\end{equation}
\begin{equation}
\label{Sum_d}
\frac{1}{Y^{1/n}} \sum_{N\mathfrak{q} \geq Y} \frac{\mu(q)^2}{\phi(q)^2} \sum_{\substack{\mathfrak{b}| \mathfrak{q} \\ N\mathfrak{b} \leq Y}} (N\mathfrak{b})^{1+1/n} = O(1),
\end{equation}
where the implicit constants depend upon the field $K$.
\end{lem}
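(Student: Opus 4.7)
The plan is to treat all five estimates uniformly by Dirichlet series methods and the Perron formula. Define the two basic generating functions
\begin{equation*}
D(s) = \sum_{\mathfrak{q}} \frac{\mu^2(\mathfrak{q})}{\phi(\mathfrak{q})\,N\mathfrak{q}^s}, \qquad F(s) = \sum_{\mathfrak{q}} \frac{\mu^2(\mathfrak{q})}{\phi(\mathfrak{q})^2\,N\mathfrak{q}^s},
\end{equation*}
both of which have Euler products over prime ideals. Comparing local factors with those of $\zeta_K$ I would produce factorizations $D(s) = \zeta_K(s+1)\,G_1(s)$ with $G_1$ holomorphic and bounded in a strip $\Re s > -1/2$ (and $G_1(0)=1$), and $F(s) = \zeta_K(s+2)\,G_2(s)$ with $G_2$ holomorphic and bounded in $\Re s > -3/2$. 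These factorizations isolate the only singularities that will matter, and the remainder $G_i$ contributes only harmless constants.

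First I would prove \eqref{Asymptotic_sum} by applying the truncated Perron formula to $D(s)$ at cutoff $Y$ and shifting the contour past the double pole of $D(s)\,Y^s/s$ at $s=0$. Using $\zeta_K(s+1) = \mathfrak{R}_K/s + c_K + O(s)$ and $Y^s = 1 + s\log Y + O(s^2)$, the residue works out to $\mathfrak{R}_K\log Y + O(1)$, while any polynomial convexity bound on $\zeta_K$ in the critical strip controls the shifted integral by $O(1)$. For \eqref{Sum_a} the associated Dirichlet series is $F(s - 1 - 1/n)$, which inherits a factorization of the form $\zeta_K(s + 1 - 1/n) \cdot H(s)$ with $H$ holomorphic at $s = 1/n$; Perron then gives a main term $\sim c\,Y^{1/n}$ from the simple pole at $s = 1/n$, which after division by $Y^{1/n}$ is $O(1)$.

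The key structural move for \eqref{Sum_b}, \eqref{Sum_c}, and \eqref{Sum_d} is to decouple the double sums by writing the squarefree ideal $\mathfrak{q}$ as $\mathfrak{q} = \mathfrak{b}\mathfrak{c}$ with $(\mathfrak{b},\mathfrak{c})=1$, so that $\mu^2(\mathfrak{q})/\phi(\mathfrak{q})^2 = \mu^2(\mathfrak{b})\mu^2(\mathfrak{c})/(\phi(\mathfrak{b})^2\phi(\mathfrak{c})^2)$. The constraint $N\mathfrak{q} \geq Y$ becomes $N\mathfrak{c} \geq Y/N\mathfrak{b}$. I would then feed in a separate tail estimate $\sum_{N\mathfrak{c} \geq M}\mu^2(\mathfrak{c})/\phi(\mathfrak{c})^2 = O(1/M)$, which is proved by applying Perron to $F(s)$ and shifting past the pole of $\zeta_K(s+2)$ at $s = -1$ (this is precisely why I want the second factorization above). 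This reduces each of the three estimates to a partial sum of the type $\sum_{N\mathfrak{b}\leq Y}\mu^2(\mathfrak{b})(N\mathfrak{b})^k/\phi(\mathfrak{b})^2$ for $k \in \{2, 2+1/n\}$, each handled by the same Perron-plus-$\zeta_K$-factorization recipe: factor, locate the rightmost pole, read off its residue, and bound the shifted contour.

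The main obstacle I anticipate is purely bookkeeping: explicitly verifying the Euler-product factorizations by passing from $1 + a_{\mathfrak{p}}(s)$ to $(1 - N\mathfrak{p}^{-(s+k)})^{-1} \cdot f_{\mathfrak{p}}(s)$ and checking that the residual Euler products $f_{\mathfrak{p}}$ converge absolutely in the strips claimed; and choosing Perron contours so that the truncation error and the shifted integral are simultaneously $O(1)$. Since we only need $O(1)$ upper bounds in \eqref{Sum_a}--\eqref{Sum_d}, any standard convexity bound on $\zeta_K$ in the critical strip suffices, and only \eqref{Asymptotic_sum} requires identifying a residue explicitly.
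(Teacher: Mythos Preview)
Your treatment of \eqref{Asymptotic_sum} and \eqref{Sum_a} is essentially the same as the paper's: factor the one-variable Dirichlet series against $\zeta_K$, apply (smoothed) Perron, shift the contour, read off the residue.

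For \eqref{Sum_b}--\eqref{Sum_d} you take a genuinely different route. The paper introduces a \emph{two-variable} Dirichlet series
\[
\sum_{\mathfrak{q}} \frac{\mu^2(\mathfrak{q})}{\phi(\mathfrak{q})^2}\,(N\mathfrak{q})^{s_1} \sum_{\mathfrak{b}\mid\mathfrak{q}} (N\mathfrak{b})^{1-s_2} = \zeta_K(1+s_2-s_1)\,G_2(s_1,s_2),
\]
applies a smoothed Perron in each variable, and shifts the $s_2$-contour across the diagonal pole $s_2=s_1$. By contrast you decouple combinatorially via $\mathfrak{q}=\mathfrak{b}\mathfrak{c}$ with $(\mathfrak{b},\mathfrak{c})=1$ and reduce everything to a single tail bound $\sum_{N\mathfrak{c}\ge M}\mu^2(\mathfrak{c})/\phi(\mathfrak{c})^2 = O(1/M)$ plus one-variable partial sums. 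Your approach is more elementary and avoids the double contour integral; the paper's is more uniform and would more directly deliver secondary terms if one wanted them. The paper also notes that \eqref{Sum_d} follows from \eqref{Sum_b} by the trivial inequality $(N\mathfrak{b})^{1+1/n}\le Y^{1/n}\,N\mathfrak{b}$ for $N\mathfrak{b}\le Y$, a shortcut you might as well take.

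One technical remark: your sketched proof of the $O(1/M)$ tail (Perron on $F(s)=\zeta_K(s+2)G_2(s)$, shift past $s=-1$) lands you on a line where $\zeta_K$ is in the critical strip and the $1/s$ weight alone does not give enough decay; you will need a smoothed Perron kernel like $1/s(s+1)$, as the paper does throughout. Alternatively, the tail bound falls out cleanly by Abel summation once you know $\sum_{N\mathfrak{c}\le t}\mu^2(\mathfrak{c})\,(N\mathfrak{c})^2/\phi(\mathfrak{c})^2 = O(t)$, which is itself a one-pole Perron computation.
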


\begin{proof} 
We treat each of \eqref{Asymptotic_sum},\eqref{Sum_a},\eqref{Sum_b},\eqref{Sum_c},\eqref{Sum_d} in turn. Our technique is the relatively standard complex analytic one and we make use of Perron's formula (see \cite[Prop. 5.54]{IwKa}), that for $c > 0$, $x > 0$
\begin{equation}
\label{perron}
\frac{1}{2\pi i} \int_{c-iT}^{c+iT} x^s \frac{ds}{s} = h(x) + O\left(\frac{x^c}{T|\log(x+2)|}\right), \quad \textrm{where} \quad h(x) = \begin{cases} 0 & \textrm{if}\; 0 < x < 1 \\ 1/2 & \textrm{if}\; x=1 \\ 1 & \textrm{if}\; x>1 \end{cases},
\end{equation}
along with a slightly smoothed variant (see \cite[Lem. 3, Ch. 13]{Ap}), that for $c > 0$, $u > 0$,
\begin{equation}
\label{perron2}
\frac{1}{2\pi i} \int_{c-i\infty}^{c+i\infty} \frac{u^{-s}}{s(s+1)} \, ds = I(u), \quad \textrm{where} \quad I(u) = (1-u)_+ = \begin{cases} (1-u) & \textrm{if}\; 0 < u \leq 1 \\ 0 & \textrm{if}\;  u > 1\end{cases}.
\end{equation}

We also make use of the following convexity estimate: for fixed number field $K$ with $[K:\mathbb{Q}] = n$, for $0 \leq \sigma \leq 1$ and $|t| \geq 1$,
\begin{equation}
\label{convexity}
\zeta_K(s) \ll_\epsilon |s|^{n(1-\sigma)/2 + \epsilon},
\end{equation}
for any $\epsilon > 0$. (This follows directly from \cite[Thm. 4]{Ra2}.)

\medskip
\textit{To verify \eqref{Asymptotic_sum}:} note that for $\Re s > 1$,
$$
\sum_\mathfrak{q} \frac{\mu(\q)^2}{\phi(\q)(N\q)^s} = \prod_\p \left(1 + \frac{1}{(N\p-1)(N\p)^s}\right) = \zeta_K(s+1)F(s),
$$
for
$$
F(s):= \prod_\p \left(1 - \frac{1}{(N\p)^{s+1}}
\right)\left(1+ \frac{1}{(N\p-1)(N\p)^s}
\right),
$$
which converges and is bounded in the region $\Re s > -1/2 + \epsilon$ for any $\epsilon > 0$, and moreover $F(0)=1$.

We evaluate
\begin{equation}
\label{Mellin1}
\frac{1}{2\pi i} \int_{1/n - i Y^{2/n}}^{1/n + i Y^{2/n}} \sum_\q \frac{\mu(\q)^2}{\phi(\q) (N\q)^s} \frac{Y^s}{s}\,ds
\end{equation}
in two different ways. In the first place by \eqref{perron} we have
$$
\eqref{Mellin1} = \sum_\q \frac{\mu(\q)^2}{\phi(\q)}\Big[ h\left(\frac{Y}{N\q}
\right) + O\left(\frac{(Y/N\q)^{1/n}}{Y^{2/n}}
\right)\Big] = \sum_{N\q < Y} \frac{\mu(\q)^2}{\phi(\q)} + O(1).
$$
But shifting the contour in \eqref{Mellin1} to the left by $2/n$ units, we see
\begin{multline*}
\eqref{Mellin1} = \Res_{s=0} \zeta_K(s+1) F(s) \frac{Y^s}{s} + O(1) + \int_{-1/n - i Y^{2/n}}^{-1/n+ i Y^{2/n}} \zeta_K(s+1) F(s) \frac{Y^s}{s}\, ds \\
= \mathfrak{R}_K \log Y + O(1) + O\left(Y^{-1/n}\int_1^{Y^{2/n}} \frac{t^{n/2n}}{t}\, dt
\right),
\end{multline*}
using \eqref{convexity} to bound terms arising from the contour shift. Simplifying the error term by evaluating the integral, and pairing up the two expressions for \eqref{Mellin1} gives \eqref{Asymptotic_sum}.

\medskip
\textit{To verify \eqref{Sum_a}:} we take a similar strategy. Note that for $\Re s > 1/n$,
$$
\sum_\q \frac{\mu(\q)^2}{\phi(\q)^2} \frac{(N\q)^{1+1/n}}{(N\q)^s} = \prod_\p\left(1 + \frac{(N\p)^{1+1/n-s}}{(N\p-1)^2}
\right) = \zeta_K(1+s-1/n) G_1(s),
$$
where $G(s)$, using the same idea as above, is a function that converges and is bounded in the region $\Re s > -1/2 + \epsilon$ for any $\epsilon > 0$. Yet
\begin{align*}
\sum_{N\q \leq Y} \frac{\mu(\q)^2}{\phi(\q)^2} (N\q)^{1+1/n} &\ll \sum_\q \frac{\mu(\q)^2}{\phi(\q)^2} (N\q)^{1+1/n} I\left(\frac{N\q}{2Y}
\right) \\
&= \frac{1}{2\pi i} \int_{2/n-i\infty}^{2/n+i\infty} \zeta_K(1+s-1/n) G_1(s) \frac{(2Y)^{s}}{s(s+1)}\, ds \\
&= \Res_{s=1/n} \zeta_K(1+s-1/n) G_1(s) \frac{(2Y)^{s}}{s(s+1)} + O(Y^{1/2n}),
\end{align*}
with the last step following by shifting the contour to the line $\Re s = 1/2n$, using \eqref{convexity} to trivially bound contributions from the contour shift. The residue in the final line produces the bound of $O(Y^{1/n})$ from which \eqref{Sum_a} follows.

\medskip
\textit{To verify \eqref{Sum_b}:} note that for $\Re(s_2-s_1) > 0$ and $\Re s_1 < 1$,
\begin{align*}
\sum_\q \frac{\mu(\q)^2}{\phi(\q)^2} (N\q)^{s_1} \sum_{\bee | \q} (N\bee)^{1-s_2} &= \prod_\p \left(1+ \frac{(N\p)^{s_1}}{(N\p-1)^2} (1+ (N\p)^{1-s_2})
\right) \\
&= \zeta_K(1+s_2-s_1) G_2(s_1,s_2),
\end{align*}
where $G_2$ is defined by the Euler product
\begin{equation}
\label{G_2}
G_s(s_1,s_2) := \prod_\p \left(1 - \frac{1}{(N\p)^{1+s_2-s_1}}
\right) \left( 1+ \frac{(N\p)^{s_1}}{(N\p-1)^2} (1 + (N\p)^{1-s_2})
\right),
\end{equation}
which the reader should check converges and is bounded for $s_1, s_2$ in the region
$$
\Re s_1 \leq 1-\epsilon, \quad \Re s_2 \geq \Re s_1 -1/2 +\epsilon,
$$
for any $\epsilon > 0$. Note that
\begin{align}
\label{Mellin3}
\sum_{N\q \geq Y} \frac{\mu(\q)^2}{\phi(\q)^2} \sum_{\substack{\bee | \q \\ N\bee \leq Y}} N\bee 
& \ll \sum_\q \frac{\mu(\q)^2}{\phi(\q)^2} I\left(\frac{Y}{2 N\q}
\right) \sum_{\bee | \q} N\bee \cdot I\left(\frac{N\bee}{2Y}
\right) \\
\notag  &= \frac{1}{(2\pi i)^2} \int_{1/n - i \infty}^{1/n + i\infty} \int_{2/n-i\infty}^{2/n + i\infty} \zeta_K(1+s_2-s_1) G_2(s_1,s_2)\\ 
&\notag \hspace{45mm} \times \frac{(Y/2)^{-s_1}}{s_1(s_1+1)} \frac{(2Y)^{s_2}}{s_2(s_2+1)} \,ds_2\, ds_1.
\end{align}
Carefully shifting the $s_2$ contour to $\Re s_2 = 1/2n$, again using \eqref{convexity} to bound errors, we see that \eqref{Mellin3} is equal to
\begin{align*}
& \frac{1}{2\pi i} \int_{1/n - i\infty}^{1/n + i \infty} \Res_{s_2=s_1} \left(\zeta_K(1+s_2-s_1) G_2(s_1,s_2) \frac{(Y/2)^{-s_1}}{s_1(s_1+1)} \frac{(2Y)^{s_2}}{s_2(s_2+1)}
\right)\, ds_1 + O(Y^{-1/2n}) \\
&=\int_{1/n-i \infty}^{1/n+i\infty} O\left(\frac{1}{|s_1|^2|s_1+1|^2}
\right)\, |ds_1| + O(Y^{-1/2n}) \\
&= O(1),
\end{align*}
which is \eqref{Sum_b}.

\medskip
\textit{To verify \eqref{Sum_c}:} note that for $\Re (s_1 + s_2) < 1$ and $\Re s_1 < 1$,
$$
\sum_\q \frac{\mu(\q)^2}{\phi(\q)^2} (N\q)^{s_1} \sum_{\bee | \q} (N\bee)^{s_2} = \zeta_K(2-s_1-s_2) G_2(s_1,1-s_2),
$$
for $G_2$ defined in \eqref{G_2}; thus $G_2(s_1,1-s_2)$ is bounded for $s_1, s_2$ in the region $s_1,s_2$ with $\Re s_1 \leq 1-\epsilon$ and $\Re (s_1+s_2) \leq 3/2-\epsilon$. Moreover, much like above,
\begin{align}
\label{Mellin4}
\sum_{N\q \geq Y} \frac{\mu(\q)^2}{\phi(\q)^2} \sum_{\substack{ \bee | \q \\ N\bee \geq Y}} 1 &\ll \sum_q \frac{\mu(\q)^2}{\phi(\q)^2} I\left(\frac{Y}{2N\q}
\right) \sum_{\bee | \q} I\left(\frac{Y}{2 N\bee}
\right) \\
\notag &= \frac{1}{(2\pi i)^2} \int_{1/2 - i\infty}^{1/2+i \infty} \int_{1/2-1/n - i \infty}^{1/2 - 1/n + i\infty} \zeta_K(2-s_1-s_2) G_2(s_1, 1-s_2) \\
\notag &\hspace{50mm}\times  \frac{(Y/2)^{-s_1}}{s_1(s_1+1)} \frac{(Y/2)^{-s_2}}{s_2(s_2+1)}\, ds_2\, ds_1.
\end{align}
Carefully shifting the $s_2$ contour to $\Re s_2 = 1/2 + 1/n$, once more using \eqref{convexity} to bound error terms arising from the contour shift, we have \eqref{Mellin4} is equal to
\begin{align*}
&\frac{1}{2\pi i} \int_{1/2-i \infty}^{1/2+i\infty} \left( \Res_{s_2 = 1-s_1} \zeta_K(2-s_1-s_2) G_2(s_1, 1-s_2) \frac{(Y/2)^{-s_1}}{s_1(s_1+1)} \frac{(Y/2)^{-s_2}}{s_2(s_2+1)}
\right)\, ds_1 \\
&\hspace{5mm} + O(Y^{-(1+1/n)}) \\
&= \int_{1/2-i\infty}^{1/2+i\infty} O(Y^{-1}/|s_1|^4)\, |ds_1| + O(Y^{-(1+1/n)}) \\
& = O(Y^{-1}),
\end{align*}
from which \eqref{Sum_c} follows.

\medskip
\textit{To verify \eqref{Sum_d}:} note that it is a direct consequence of \eqref{Sum_b}, because
$$
\frac{1}{Y^{1/n}} \sum_{N\q \geq Y} \frac{\mu(\q)^2}{\phi(\q)^2} \sum_{\substack{\bee | \q \\ N\bee \leq Y}} (N\bee)^{1+1/n} \leq \sum_{N\q \geq Y} \frac{\mu(\q)^2}{\phi(\q)^2} \sum_{\substack{\bee | \q \\ N\bee \leq Y}} N\bee.
$$
\end{proof}

\subsection{Counts of ideal elements}
\label{IdealCounts}

We will need a few estimates regarding the distribution of lattices induced by ideals. 
\begin{lem}
\label{lem:not_too_skew}
For $[K:\mathbb{Q}] = n$ and an ideal $\mathfrak{a} \subset \mathcal{O}_K$, define the lattice $\Lambda_\mathfrak{a} = \{m(\alpha):\; \alpha \in \mathfrak{a}\} \subset \mathbb{Z}^n$ and the dual lattice $\Lambda_\mathfrak{a}^\ast := \{y \in \mathbb{R}^n:\; \textrm{for all}\; x\in \Lambda_\mathfrak{a},\; \langle x,y \rangle \in \mathbb{Z}\}$. Finally define the count
$$
C(\Lambda_\mathfrak{a}^\ast, r) := \#\{y \in \Lambda_\mathfrak{a}^\ast:\; y \neq 0, \, |y|\leq r\}.
$$
Then we have a bound
$$
C(\Lambda_\mathfrak{a}^\ast, r) \ll_{m,K} N\mathfrak{a} \cdot r^n.
$$
\end{lem}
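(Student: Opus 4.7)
The strategy is to exploit the $\mathcal{O}_K$-module structure of $\mathfrak{a}$ to force all the successive minima of $\Lambda_\mathfrak{a}^\ast$ to be comparable in size, and then count by a sphere-packing argument.

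Since $[\mathcal{O}_K:\mathfrak{a}] = N\mathfrak{a}$ and $m$ is a $\mathbb{Z}$-module isomorphism $\mathcal{O}_K \to \mathbb{Z}^n$, the lattice $\Lambda_\mathfrak{a}$ has index $N\mathfrak{a}$ in $\mathbb{Z}^n$, so $\mathrm{covol}(\Lambda_\mathfrak{a}) = N\mathfrak{a}$ and $\mathrm{covol}(\Lambda_\mathfrak{a}^\ast) = 1/N\mathfrak{a}$. For each $j$, multiplication by $\alpha_j$ on $\mathcal{O}_K$ is represented, in the fixed basis $\alpha_1,\ldots,\alpha_n$, by a fixed integer matrix $T_j$ independent of $\mathfrak{a}$. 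Because $\mathfrak{a}$ is stable under multiplication by $\mathcal{O}_K$, each $T_j$ preserves $\Lambda_\mathfrak{a}$, and consequently each transpose $T_j^T$ preserves $\Lambda_\mathfrak{a}^\ast$.

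The crucial structural observation would be that for any nonzero $v \in \mathbb{R}^n$ the vectors $T_1^T v, T_2^T v, \ldots, T_n^T v$ are linearly independent over $\mathbb{R}$: any nontrivial relation $\sum c_j T_j^T v = 0$ with $c_j \in \mathbb{R}$ would read $T_\gamma^T v = 0$ for $\gamma := \sum c_j \alpha_j \neq 0$, and $T_\gamma$ is invertible (its inverse being $T_{\gamma^{-1}}$), forcing $v = 0$. Applied to a vector $v$ achieving the first successive minimum $\lambda_1 := \lambda_1(\Lambda_\mathfrak{a}^\ast)$, this produces $n$ linearly independent vectors of $\Lambda_\mathfrak{a}^\ast$ each of Euclidean norm at most $C_K \lambda_1$, where $C_K := \max_j \|T_j\|_{\mathrm{op}}$ depends only on $K$ and the chosen basis. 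Hence $\lambda_n(\Lambda_\mathfrak{a}^\ast) \leq C_K \lambda_1$.

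Minkowski's second theorem together with $\mathrm{covol}(\Lambda_\mathfrak{a}^\ast) = 1/N\mathfrak{a}$ gives $\prod_{i=1}^n \lambda_i(\Lambda_\mathfrak{a}^\ast) \asymp_n 1/N\mathfrak{a}$, and combining this with the bound $\prod_i \lambda_i \leq C_K^n \lambda_1^n$ from the previous step forces $\lambda_1 \gg_{K,n} (N\mathfrak{a})^{-1/n}$. The standard packing bound---disjoint open balls of radius $\lambda_1/2$ centered at points of $\Lambda_\mathfrak{a}^\ast \cap B_r$ fit inside $B_{r+\lambda_1/2}$---then yields $|\Lambda_\mathfrak{a}^\ast \cap B_r| \leq (2r/\lambda_1 + 1)^n \ll_{K,n} r^n N\mathfrak{a}$ in the range $r \geq \lambda_1$, while for $r < \lambda_1$ the count $C(\Lambda_\mathfrak{a}^\ast, r)$ vanishes and the bound is trivial. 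The main obstacle is the structural claim about $\lambda_n/\lambda_1$: without it a sufficiently skew sublattice of $\mathbb{Z}^n$ of index $N\mathfrak{a}$ could have far more short dual vectors than $r^n N\mathfrak{a}$ allows, and this is precisely where the $\mathcal{O}_K$-module (rather than merely $\mathbb{Z}$-module) structure of $\mathfrak{a}$ enters.
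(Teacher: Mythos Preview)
Your ``crucial structural observation'' is not quite right as stated: for general nonzero $v \in \mathbb{R}^n$ the vectors $T_1^T v, \ldots, T_n^T v$ can be linearly dependent. The element $\gamma = \sum c_j \alpha_j$ with real $c_j$ lies in $K \otimes_\mathbb{Q} \mathbb{R}$, not in $K$, and this ring has zero divisors whenever $[K:\mathbb{Q}] > 1$ and $K$ is not imaginary quadratic, so $T_\gamma$ need not be invertible and the appeal to $T_{\gamma^{-1}}$ fails. For instance, with $K = \mathbb{Q}(\sqrt{2})$ and basis $1,\sqrt{2}$, taking $v = (1,\sqrt{2})$ gives $T_2^T v = \sqrt{2}\, v$. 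The fix is immediate, however: you only ever apply the claim to a shortest vector of $\Lambda_\mathfrak{a}^\ast$, and since $\Lambda_\mathfrak{a}$ has finite index in $\mathbb{Z}^n$ its dual lies in $\mathbb{Q}^n$. For $v \in \mathbb{Q}^n$ the vectors $T_j^T v$ are rational, so linear independence over $\mathbb{R}$ and over $\mathbb{Q}$ coincide and one may take the $c_j$ rational; then $\gamma \in K^\times$ and your invertibility argument is valid. With this correction the proof is complete.

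Your route differs from the paper's. The paper quotes a lemma of Rieger that every ideal $\mathfrak{a}$ admits a $\mathbb{Z}$-basis $\eta_1,\ldots,\eta_n$ with $\|M(\eta_i)\|_{\ell^\infty} \ll_K (N\mathfrak{a})^{1/n}$ in the Minkowski embedding, transfers this bound to the $m$-coordinates, and then observes that the dual-lattice points in $B_r$ correspond to integer points in a ball of radius $\ll (N\mathfrak{a})^{1/n} r$. Both arguments ultimately encode the same fact---that $\Lambda_\mathfrak{a}$ is uniformly well-rounded---but the paper imports it as a black box about balanced bases, whereas you derive it directly from the $\mathcal{O}_K$-module structure via $\lambda_n(\Lambda_\mathfrak{a}^\ast) \ll_K \lambda_1(\Lambda_\mathfrak{a}^\ast)$ and Minkowski's second theorem. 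Your version is more self-contained and makes transparent exactly where being an ideal (rather than an arbitrary finite-index sublattice) enters; the paper's version is shorter once the citation is granted.
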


Note that for any lattice $\Lambda \subset \mathbb{R}^n$, for sufficiently large $r$, $\#\{x \in \Lambda^\ast:\; x\neq 0,\, |x|\leq r\} \sim \det(\Lambda) \vol(B^r)$, where $\det(\Lambda)$ is the volume of the fundamental domain of $\Lambda$ and $B_r$ is the Euclidean ball of radius $r$. Thus the theorem is made significant by being a bound \emph{uniform} in $\mathfrak{a}$ and $r$.

In particular, this lemma implies that for $r \ll 1/(N\mathfrak{a})^{1/n}$, there are no non-zero elements of $\Lambda_\mathfrak{a}^\ast$ in $B_r$. This is a feature of lattices that are not too distorted, and it is exactly this feature of the lattice $\Lambda_\mathfrak{a}$ that we will make use of. This fact and by implication Lemma 3 seem to be well known (see e.g. \cite{Ka} or \cite{CaHaLeOlPoTh}), but we indicate the proof below.

In our proof and in what follows we make use of the Minkowski embedding $M: K \rightarrow \mathbb{R}^n$, defined by
$$
M(\alpha) = \left(\sigma_1(\alpha),...,\sigma_r(\alpha), \Re \tau_1(\alpha),...,\Re \tau_s(\alpha), \Re \tau_1(\alpha), ..., \Re \tau_s(\alpha)
\right),
$$
where $\sigma_j$ are the embeddings $K\rightarrow\mathbb{R}$ and $\tau_j$ are the complex embeddings $K\rightarrow\mathbb{C}$, with one chosen from each conjugate pair (so $r+2s = n$). $M$ is one to one and linear over $\mathbb{Q}$; it follows that for the map $m: K\rightarrow \mathbb{R}^n$ defined in $\eqref{mmap}$, $m = TM$ for some invertible matrix $T : \mathbb{R}^n\rightarrow\mathbb{R}^n$ ($T$ depends of course on the choice of basis elements used to define $m$).

\begin{proof}[Proof of Lemma \ref{lem:not_too_skew}]
We rely upon the following claim (see e.g. (83) of \cite{Ri2}) \emph{For fixed $K$, there exists a constant $c_K$ such that for any ideal $\mathfrak{a} \subset \mathcal{O}_K$, there exists an integral basis $\eta_1, ..., \eta_n$ such that
$$
\| M(\eta_i)\|_{\ell^\infty} \leq c_K (N\mathfrak{a})^{1/n} \quad \textrm{for all}\; i.
$$}

As $m = TM$ for an invertible map $T$, one also has
$$
\| m(\eta_i)\|_{\ell^\infty} \ll_{m,K} (N\mathfrak{a})^{1/n} \quad \textrm{for all}\; i.
$$
The lattice $\Lambda_\mathfrak{a}$ is generated by $m(\eta_1),..., m(\eta_n)$, so that for the $n\times n$ matrix $L_\eta$ with these vectors as columns,
$$
L_\eta:= \begin{pmatrix} | & & | \\ m(\eta_1) & \cdots & m(\eta_n) \\ | & & | \end{pmatrix},
$$
we have $\Lambda_\mathfrak{a} = L_\eta(\mathbb{Z}^n)$. Note that $\Lambda_\mathfrak{a}^\ast = L^{-1}_\eta(\mathbb{Z}^n),$ so that
\begin{align*}
C(\Lambda_\mathfrak{a}^\ast, r) &= \#\{ w \in \mathbb{Z}^n:\; w \neq 0, \, |L_\eta^{-1}w| \leq r\} \\
& = \# (\mathbb{Z}^n\setminus \{0\})\cap L_\eta(B_r).
\end{align*}
But because all the columns of $L_\eta$ are bound in norm by $(N\mathfrak{a})^{1/n}$, we have
$$
L_\eta(B_r) \subset B_{(c_{m,K,n} (N\mathfrak{a})^{1/n}r)},
$$
for some constant $c_{m,K,n}$ that depends only upon $m, K$ and the dimension $n$ of the space in which we have embedded (and thus only upon $m$ and $K$). But it is classical that $ \# (\mathbb{Z}^n\setminus \{0\})\cap B_s \ll_n s^n$ for any $s > 0$, so
$$
\# (\mathbb{Z}^n\setminus \{0\})\cap B_{(c_{m,K,n} (N\mathfrak{a})^{1/n}r)} \ll N\mathfrak{a} r^n,
$$
which establishes the claimed bound.
\end{proof}

\begin{lem}
\label{lem:smooth_counts}
Fix $[K:\mathbb{Q}]=n$, nice $w: \mathbb{R}^n \rightarrow \mathbb{R}$, and an embedding $m: K\rightarrow \mathbb{R}^n$ as in $\eqref{mmap}$. Then there exists a constant $C$ such that for any ideal $\mathfrak{a}$ and all $H >0$,
\begin{equation}
\label{smooth_counts}
\sum_{\eta \in \mathfrak{a}} w\left(\frac{m(\eta)}{H}
\right) = \begin{cases} w(0) & \textrm{for}\; N\mathfrak{a} \geq C H^n \\ \frac{H^n}{N\mathfrak{a}} \hat{w}(0) + O\left(\frac{(N\mathfrak{a})^{1/n}}{H}
\right) & \textrm{for}\; N\mathfrak{a} < CH^n.\end{cases}
\end{equation}
\end{lem}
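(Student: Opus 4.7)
The plan is to apply the Poisson summation formula to the lattice $\Lambda_\mathfrak{a}=m(\mathfrak{a})\subset\mathbb{Z}^n$, using Lemma \ref{lem:not_too_skew} to control the resulting sum over the dual lattice $\Lambda_\mathfrak{a}^\ast$, and to dispose of the large-$N\mathfrak{a}$ regime separately by a sparseness argument for $\Lambda_\mathfrak{a}$ itself.

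In the regime $N\mathfrak{a}\geq CH^n$, the first observation is that nonzero elements of $\mathfrak{a}$ are pushed far from the origin under $m$. For $0\neq\alpha\in\mathfrak{a}$ one has $|N(\alpha)|\geq N\mathfrak{a}$, and from $|N(\alpha)|=\prod_j|\sigma_j(\alpha)|\prod_k|\tau_k(\alpha)|^2$ at least one embedding of $\alpha$ has absolute value $\geq (N\mathfrak{a})^{1/n}$, whence $|M(\alpha)|\gg (N\mathfrak{a})^{1/n}$. Since $m=TM$ with $T$ invertible, $|m(\alpha)|\gg_{K,m}(N\mathfrak{a})^{1/n}$. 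Because $w$ is supported in some Euclidean ball $B_R$, choosing $C$ large enough (depending on $K,m,w$) forces $m(\eta)/H$ outside $\mathrm{supp}\,w$ for every nonzero $\eta\in\mathfrak{a}$, leaving only the $\eta=0$ contribution $w(0)$.

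For $N\mathfrak{a}<CH^n$, I would invoke Poisson summation. Since $[\mathbb{Z}^n:\Lambda_\mathfrak{a}]=N\mathfrak{a}$, the covolume of $\Lambda_\mathfrak{a}$ is $N\mathfrak{a}$, and with $f(x)=w(x/H)$, $\hat f(\xi)=H^n\hat w(H\xi)$, one obtains
$$
\sum_{\eta\in\mathfrak{a}} w\!\left(\frac{m(\eta)}{H}\right) = \frac{H^n}{N\mathfrak{a}}\sum_{y\in\Lambda_\mathfrak{a}^\ast} \hat w(Hy).
$$
The niceness hypotheses on $w$ (compact support and $\hat w(\xi)=O(|\xi|^{-n-1})$) give enough decay on both sides to justify pointwise equality. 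The $y=0$ term is the claimed main term $\frac{H^n}{N\mathfrak{a}}\hat w(0)$. For $y\neq 0$, the bound $|\hat w(Hy)|\ll (H|y|)^{-n-1}$ reduces matters to estimating $\sum_{0\neq y\in\Lambda_\mathfrak{a}^\ast}|y|^{-n-1}$: by Lemma \ref{lem:not_too_skew} the dual lattice has no nonzero vector of norm below some $r_0\asymp (N\mathfrak{a})^{-1/n}$, and the dyadic shell $|y|\in[2^kr_0,2^{k+1}r_0]$ contains $\ll N\mathfrak{a}(2^kr_0)^n\asymp 2^{kn}$ points, contributing $\ll 2^{-k}r_0^{-n-1}$. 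Summing over $k\geq 0$ yields $\sum_{y\neq 0}|y|^{-n-1}\ll r_0^{-n-1}\asymp (N\mathfrak{a})^{1+1/n}$, so the off-zero total is $\ll \frac{H^n}{N\mathfrak{a}}\cdot H^{-n-1}\cdot (N\mathfrak{a})^{1+1/n}=(N\mathfrak{a})^{1/n}/H$, matching the claimed error.

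The main obstacle is uniformity in $\mathfrak{a}$, and this is exactly where Lemma \ref{lem:not_too_skew} does the essential work: a generic lattice of covolume $N\mathfrak{a}$ could be arbitrarily skewed, but the structural fact that $\Lambda_\mathfrak{a}$ comes from an ideal of $\mathcal{O}_K$ prevents $\Lambda_\mathfrak{a}^\ast$ from carrying spurious short vectors that would inflate the error.
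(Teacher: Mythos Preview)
Your proposal is correct and follows essentially the same route as the paper: a sparseness argument for $\Lambda_\mathfrak{a}$ via $|m(\eta)|\gg (N\mathfrak{a})^{1/n}$ in the large-$N\mathfrak{a}$ regime, and Poisson summation plus a dyadic estimate using Lemma~\ref{lem:not_too_skew} in the small-$N\mathfrak{a}$ regime. The only cosmetic difference is that the paper indexes its dyadic shells by radii $2^\nu/H$ rather than by $2^k r_0$ with $r_0\asymp (N\mathfrak{a})^{-1/n}$, but the resulting bound $(N\mathfrak{a})^{1/n}/H$ is computed identically.
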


Recall that \emph{nice} functions were defined in Definition \ref{def:nice}.

\begin{proof}
We verify \eqref{smooth_counts} for $N\mathfrak{a} \geq CH^n$ first. Because $m = TM$ for invertible $T$,
$$
\Big| \frac{m(\eta)}{H} \Big| \gg \Big| \frac{M(\eta)}{H}\Big|,
$$
But using the arithmetic geometric mean inequality to proceed to the second line below, we have,
\begin{multline*}
|M(\eta)| \gg (|\sigma_1(\eta)|^2 + \cdots + \sigma_r(\eta) + 2|\tau_1(\eta)|^2 + \cdots + 2|\tau_s(\eta)|^2)^{1/2} \\ \gg (|\sigma_1(\eta)\cdots\sigma_r(\eta)| |\tau_1(\eta)|^2\cdots |\tau_s(\eta)|^2)^{1/n} = |N\eta|^{1/n}.
\end{multline*}
But for nonzero $\eta \in \mathfrak{a}$, we have $|N\eta| \geq N\mathfrak{a}$. Collecting the above observations we have for nonzero $\eta \in \mathfrak{a}$,
$$
\Big|\frac{m(\eta)}{H}\Big| \gg \frac{(N\mathfrak{a})^{1/n}}{H}.
$$
Hence if $C$ is sufficiently large that the function $w$ is supported in a ball of radius $\ll C^{1/n}$, the top line of \eqref{smooth_counts} will be true, since the only nonzero term in the sum will arise from $\eta = 0$.

We now turn to a proof of \eqref{smooth_counts} in the case $N\mathfrak{a} < CH^n$. In fact the estimate in this second half of \eqref{smooth_counts} will always be true; it ceases however to be a good bound when $N\mathfrak{a} \geq CH^n$.

Note that
\begin{align}
\label{fourier_lattice}
\notag \sum_{\eta\in\mathfrak{a}} w\left(\frac{m(\eta)}{H}
\right) &= \sum_{x \in \Lambda_\mathfrak{a}} w(x/H) \\
\notag &= \frac{H^n}{\det \Lambda_\mathfrak{a}} \sum_{y \in \Lambda_\mathfrak{a}^\ast} \hat{w}(Hy) \\
&= \frac{H^n}{\det \Lambda_\mathfrak{a}} \hat{w}(0) + \frac{H^n}{\det \Lambda_\mathfrak{a}} \sum_{\substack{ y \in \Lambda_\mathfrak{a}^\ast \\ y \neq 0}} \hat{w}(Hy).
\end{align}
Furthermore note that here
\begin{equation}
\label{det_norm}
\det \Lambda_\mathfrak{a} = N\mathfrak{a}.
\end{equation}
We have $\hat{w}(\xi) = O(1/|\xi|^{n+1})$ by hypothesis, and so by taking a dyadic decomposition we have,
\begin{equation}
\label{dyadic}
\sum_{\substack{ y \in \Lambda_\mathfrak{a}^\ast \\ y \neq 0}} \hat{w}(Hy) \ll \sum_{\nu = 0}^\infty \frac{1}{(2^\nu)^{n+1}} C(\Lambda_\mathfrak{a}^\ast, 2^\nu/H).
\end{equation}
As Lemma \ref{lem:not_too_skew} implies that for $N\mathfrak{a} \cdot (2^\nu/H)^n \ll 1$ we have $C(\Lambda_\mathfrak{a}^\ast, 2^\nu/H) = 0$, and as Lemma \ref{lem:not_too_skew} implies the bound $C(\Lambda_\mathfrak{a}^\ast, 2^\nu/H) \ll N\mathfrak{a}(2^\nu/H)^n$ otherwise, we can bound \eqref{dyadic} by
$$
\ll \sum_{2^\nu/H \gg (N\mathfrak{a})^{-1/n}} \frac{1}{(2^\nu)^{n+1}} N\mathfrak{a}\cdot (2^\nu/H)^n \ll \frac{(N\mathfrak{a})^{1+1/n}}{H^{n+1}}.
$$
Using this estimate in \eqref{fourier_lattice} with \eqref{det_norm}, we obtain the second line of \eqref{smooth_counts}.
\end{proof}

\section{Sums of singular series: a proof of Theorem \ref{thm:SingularSeriesGeneralSum}}
\label{MainTheoremProof}

We now turn in earnest to the proof of Theorem \ref{thm:SingularSeriesGeneralSum}.

\begin{proof}[Proof of Theorem \ref{thm:SingularSeriesGeneralSum}]
We proceed in three steps.

\medskip
\emph{Step 1:} Note from Lemma \ref{lem:SingularToRamanujan}, for $\eta \neq 0$,
$$
\mathfrak{S}(\eta) -1 = \sum_{N\mathfrak{q} > 1} \frac{\mu(\mathfrak{q})^2}{\phi(\mathfrak{q})^2} c_\mathfrak{q}(\eta).
$$
Hence using that $c_\mathfrak{q}(0) = \phi(\mathfrak{q})$, we have
\begin{equation}
\label{oscillatory_parts}
\sum_{\substack{\eta \neq 0 \\ \eta \in \mathcal{O}_K}} (\mathfrak{S}(h)-1) w\left(\frac{m(\eta)}{H}
\right) = \sum_{N\mathfrak{q} > 1} \frac{\mu(\mathfrak{q})^2}{\phi(\mathfrak{q})^2} \Big[ \sum_{\eta \in \mathcal{O}_k} c_\mathfrak{q}(\eta) w\left(\frac{m(\eta)}{H}
\right) - \phi(\mathfrak{q}) w(0)\Big].
\end{equation}
The advantage of \eqref{oscillatory_parts} is that we will be able to effectively control the sums of Ramanujan sums in the inner brackets.

\medskip
\emph{Step 2:} We define
$$
S_\mathfrak{q}(H) := \sum_{\eta \in \mathcal{O}_K} c_\mathfrak{q}(\eta) w\left(\frac{m(\eta)}{H}
\right),
$$
and give a good estimate for such sums. Note that
\begin{equation}
\label{sum_of_Sq}
\sum_{\mathfrak{q} | \mathfrak{a}} S_\mathfrak{q}(H) = \sum_{\eta \in \mathfrak{a}} w\left(\frac{m(\eta)}{H}
\right) N\mathfrak{a},
\end{equation}
by \eqref{Ramanujan_condense}. But the quantity on the right of this equation is estimated by Lemma \ref{lem:smooth_counts}. Hence using M\"obius inversion on \eqref{sum_of_Sq} followed by this Lemma we have,
\begin{align*}
S_\mathfrak{q}(H) &= \sum_{\mathfrak{a}\mathfrak{b} = \mathfrak{q}} \mu(\mathfrak{a}) \left( \sum_{\eta \in \mathfrak{b}} w\left(\frac{m(\eta)}{H}
\right) N\mathfrak{b}
\right) \\
& =\sum_{\substack{\mathfrak{ab} = \mathfrak{q} \\ N\mathfrak{b} \geq C H^n}} \mu(\mathfrak{a}) N\mathfrak{b}\cdot  w(0) + \sum_{\substack{\mathfrak{ab} = \mathfrak{q} \\ N\mathfrak{b} < CH^n}}\bigg( \mu(\mathfrak{a}) H^n \hat{w}(0) + O\left(\frac{(N\mathfrak{b})^{1/+1/n}}{H}
\right)\bigg). 
\end{align*}
This gives for $N\mathfrak{q} < CH^n$,
\begin{equation}
\label{Sq_est1}
S_\mathfrak{q}(H) = O\left(\frac{1}{H} \sum_{\mathfrak{b}|\mathfrak{q}} (N\mathfrak{b})^{1+1/n}
\right) = O\left(\frac{(N\mathfrak{q})^{1+1/n}}{H}
\right)
\end{equation}
and for $N\mathfrak{q} \geq CH^n$,
\begin{multline}
\label{Sq_est2}
S_\mathfrak{q}(H) = \phi(\mathfrak{q}) w(0) + O\left(\sum_{\substack{\mathfrak{b} | \mathfrak{q} \\ N\mathfrak{b} < CH^n}} N\mathfrak{b}
\right) \\+ O\left(H^n \sum_{\substack{\mathfrak{b} | \mathfrak{q} \\ N\mathfrak{b} \geq CH^n}} 1
\right) + O\left( \frac{1}{H} \sum_{\substack{\mathfrak{b} | \mathfrak{q} \\ N\mathfrak{b} < CH^n}} (N\mathfrak{b})^{1+1/n}
\right).
\end{multline}

\medskip
\emph{Step 3:} Substituting \eqref{Sq_est1}, \eqref{Sq_est2} into the equation \eqref{oscillatory_parts} derived in Step 1, we obtain
\begin{align*}
\sum_{\substack{\eta \neq 0 \\ \eta \in \mathcal{O}_K}} (\mathfrak{S}(h)-1) w\left(\frac{m(\eta)}{H}
\right) =& \sum_{1 < N\mathfrak{q} < CH^n} \frac{\mu(\mathfrak{q})^2}{\phi(\mathfrak{q})^2}\Big[-\phi(\mathfrak{q}) w(0) + O\left(\frac{(N\mathfrak{q})^{1+1/n}}{H}
\right)\Big] \\
& + \sum_{N\mathfrak{q} \geq C H^n} \frac{\mu(\mathfrak{q})^2}{\phi(\mathfrak{q})^2}\Big[ O\left(\sum_{\substack{\mathfrak{b} | \mathfrak{q} \\ N\mathfrak{b} < CH^n}} N\mathfrak{b}
\right)  + O\left(H^n \sum_{\substack{\mathfrak{b} | \mathfrak{q} \\ N\mathfrak{b} \geq CH^n}} 1
\right) \\ 
&\hspace{40mm} + O\left( \frac{1}{H} \sum_{\substack{\mathfrak{b} | \mathfrak{q} \\ N\mathfrak{b} < CH^n}} (N\mathfrak{b})^{1+1/n}
\right)\Big]
\end{align*}
Each error term being summed here can be controlled using \eqref{Sum_a},\eqref{Sum_b},\eqref{Sum_c},\eqref{Sum_d} of Lemma \ref{lem:complicated_sums} (with $Y = CH^n$), and an asymptotic formula for the first (main) term is given by \eqref{Asymptotic_sum}. 

Simplifying, we obtain
$$
\sum_{\substack{\eta \neq 0 \\ \eta \in \mathcal{O}_K}} (\mathfrak{S}(h)-1) w\left(\frac{m(\eta)}{H}
\right) = - w(0) \mathfrak{R}_K \log(H^n) + O(1),
$$
as claimed.
\end{proof}

\section{Primes in short intervals: a heuristic argument}
\label{PrimesHeuristicDerivation}

We now turn to a \emph{heuristic} justification of Conjecture \ref{conj:primevarianceNumberFields}, based on the computation in Theorem \ref{thm:SingularSeriesGeneralSum}. We do not formalize this heuristic; the best that one could hope for, without a real breakthrough, is a rather restricted conditional implication: that a version of Conjecture \ref{conj:GrossSmithNumberFields} with uniform error terms in $\eta$ implies Conjecture \ref{conj:primevarianceNumberFields} for a restricted range of $H$. In the heuristic that follows, in addition to assuming Conjecture \ref{conj:GrossSmithNumberFields}, we will also assume that the error terms between the right and left hand sides of \eqref{GrossSmithNumberFields} exhibit significant cancellation when summed over $\eta$, and we will not be careful in controlling the boundary-portion of certain sums that appear below. Finally, owing to the restriction of test functions $w$ in Theorem \ref{thm:SingularSeriesGeneralSum}, our justification below will really only be complete for norms $\|\cdot\|$ defined with respect to a convex body with $C^\infty$-boundary and nonzero Gaussian curvature everywhere; we believe however that Conjecture \ref{conj:primevarianceNumberFields} is true in the generality stated. Lines below that indicate an asymptotic relation derived heuristically rather than rigorously we demarcate using  `$\approx$'.

\medskip

\noindent \emph{Heuristic derivation of Conjecture \ref{conj:primevarianceNumberFields}.} 
By expanding the variance we have,
\begin{align*}
\Var_K(X;H) \approx& \;\frac{1}{\vol(B_X^{\|\cdot\|})} \int_{\|t\|\leq X} \bigg|\sum_{\substack{\alpha \in \mathcal{O}_K \\ \|m(\alpha)-t\| \leq H}} \mathbf{1}_\mathcal{P}(\alpha)\bigg|^2\, dt - E_K(X;H)^2 \\
=& \;\frac{1}{\vol(B_X^{\|\cdot\|})} \sum_{\alpha \in \mathcal{O}_K} \mathbf{1}_\mathcal{P}(\alpha)^2 \int_{\substack{\|t\|\leq X \\ \|t - m(\alpha) \| \leq H}}dt \\
&\; + \frac{1}{\vol(B_X^{\|\cdot\|})} \sum_{\substack{\alpha_1,\alpha_2 \in \mathcal{O}_K \\ \alpha_1\neq \alpha_2}} \mathbf{1}_\mathcal{P}(\alpha_1) \mathbf{1}_\mathcal{P}(\alpha_2) \int_{\substack{\|t\|\leq X \\ \|t-m(\alpha_1)\| \leq H \\ \|t-m(\alpha_2)\| \leq H}} dt - E_K(X;H)^2\\
\approx&\; E_K(X;H) + \frac{H^n}{\vol(B_X^{\|\cdot\|})} \sum_{\|m(\alpha)\|\leq X} \sum_{\substack{\eta \in \mathcal{O}_K \\ \eta \neq 0}} \mathbf{1}_\mathcal{P}(\alpha) \mathbf{1}_\mathcal{P}(\alpha+\eta) w\left(\frac{m(\eta)}{H}
\right) \\
&- E_K(X;H)^2,
\end{align*}
where $w(x) := \mathbf{1}_\mathcal{U}\ast \mathbf{1}_\mathcal{U}(x)$ and $\mathcal{U} = \{x\in \mathbb{R}^n: \|x\|\leq 1\}$.

Utilizing Conjecture \ref{conj:GrossSmithNumberFields} (along with Theorem \ref{thm:logsum}), it is reasonable to suppose the quantity above is approximated by
\begin{equation}
\label{approx_expansion}
\approx E_K(X;H) + \frac{H^n}{\mathfrak{R}_K^2 \log^2 \vol(B_X^{\|\cdot\|})} \sum_{\substack{\eta \in \mathcal{O}_K \\ \eta \neq 0}} \mathfrak{S}(\eta) w\left(\frac{m(\eta)}{H}
\right) - E_K(X;H)^2.
\end{equation}
But approximating the lattice sum below with an integral and using Proposition \ref{prop:ExpectedPrimes} we have,
\begin{align*}
\frac{H^n}{\mathfrak{R}_K^2 \log^2 \vol(B_X^{\|\cdot\|})} \sum_{\substack{\eta \in \mathcal{O}_K \\ \eta \neq 0}} w\left(\frac{m(\eta)}{H}
\right) & \sim \frac{H^n}{\mathfrak{R}_K^2 \log^2 \vol(B_X^{\|\cdot\|})} \int \mathbf{1}_\mathcal{U}\ast\mathbf{1}_\mathcal{U}\left(\frac{x}{H}
\right)\, d^nx \\
&= \frac{H^{2n} \vol^2(\mathcal{U}) }{\mathfrak{R}_K^2 \log^2 \vol(B_X^{\|\cdot\|})} \\
&\sim E_K(X;H)^2.
\end{align*}
Substituting the above for $E_K(X;H)^2$, we therefore have that \eqref{approx_expansion} is reasonably approximated by
\begin{align*}
&\approx E_K(X;H) + \frac{H^n}{\mathfrak{R}_K^2 \log^2 \vol(B_X^{\|\cdot\|})} \sum_{\substack{\eta \in \mathcal{O}_K \\ \eta \neq 0}} (\mathfrak{S}(\eta)-1) w\left(\frac{m(\eta)}{H}
\right) \\
&= E_K(X;H) + (1+o(1)) \frac{H^n}{\mathfrak{R}_K^2 \log^2 \vol(B_X^{\|\cdot\|})} \cdot \left(- \mathbf{1}_\mathcal{U}\ast\mathbf{1}_\mathcal{U}(0) \mathfrak{R}_K \log(H^n)
\right)
\end{align*}
We have $H^n\cdot  \mathbf{1}_\mathcal{U}\ast\mathbf{1}_\mathcal{U}(0) = H^n\cdot \vol(\mathcal{U}) = \vol(B_H^{\|\cdot\|})$, and so using Proposition \ref{prop:ExpectedPrimes} and the subsequent evaluation \eqref{exp_asymp} to keep track of $E_K(X;H)$ asymptotically, the above simplifies to
$$
\sim \left(1 - \frac{\log \vol(B_H^{\|\cdot\|})}{\log \vol(B_X^{\|\cdot\|})}
\right)E_K(X;H) \sim (1-\delta) E_K(X;H).
$$
This is exactly Conjecture \ref{conj:primevarianceNumberFields}.

That this computation simplifies so nicely and in such generality certainly suggests there is something more to be understood.

As for Theorem \ref{thm:SingularSeriesGeneralSum}, it should be possible to write down a version of Conjecture \ref{conj:primevarianceNumberFields} containing lower order terms of size $O(E_K(X;H)/\log X)$. We do not make a conjecture here for these lower order terms.

\section{Numerical evidence}

We have also found numerical support for Conjecture \ref{conj:primevarianceNumberFields} in the case of quadratic extensions, both real and imaginary. For a field $\mathbb{Q}(\sqrt{D})$ with ring of integers $\mathbb{Z}[\sqrt D]$, we considered short intervals induced by the norm $||a + b\sqrt{D}|| = \mathrm{max}(|a|,|b|)$, which we think of as small squares. For the fields $\mathbb{Q}(\sqrt D)$ with ring of integers $\mathbb{Z}\left[\frac{1 + \sqrt D}2\right]$, we instead considered the norm $\left|\left|a + b\left(\frac{1+\sqrt D}{2}\right)\right|\right| = \mathrm{max}(|a|,|b|)$, or equivalently $||a + b\sqrt D|| = \mathrm{max}(|a-b|,|2b|)$.

For each field $K = \mathbb{Q}(\sqrt D)$, for each large value $X$, we computed $V_K(X; X^\delta)$ for $\delta \in \frac 1{10}\mathbb{Z}$, $0 < \delta < 1$. We then plotted $\frac{V_K(X;X^\delta)}{E_K(X;X^\delta)}$ as a function of $\delta$, and observed that in all the quadratic field cases this plot approximated the line $1- \delta$. 

The main subtlety in our algorithm was in computing values of $\pi_K(x;H)$. To do this, for each $x + y \sqrt D$ with norm at most $X$ and $x, y \ge 0$, we counted the number of primes in the set $\{a + b \sqrt D \mid 0 \le a \le x, 0 \le b \le y\}$. We then used these counts at each of the corners of a given small square to solve for the number of primes within it. Whether or not elements are primes was tested using standard algorithms from algebraic number theory.

Below are plots of $\frac{V_K(X;X^\delta)}{E_K(X;X^\delta)}$, where $X = 1000$, as a function of $\delta$ for various quadratic number fields, both real and imaginary. In each case, the line $1-\delta$ is plotted.

For many of these cases, the convergence is rather slow, supporting the idea that there are lower-order terms which remain to be investigated.

\begin{figure}[h]
\centering
\begin{minipage}{.5\textwidth}
\includegraphics[width=0.9\linewidth]{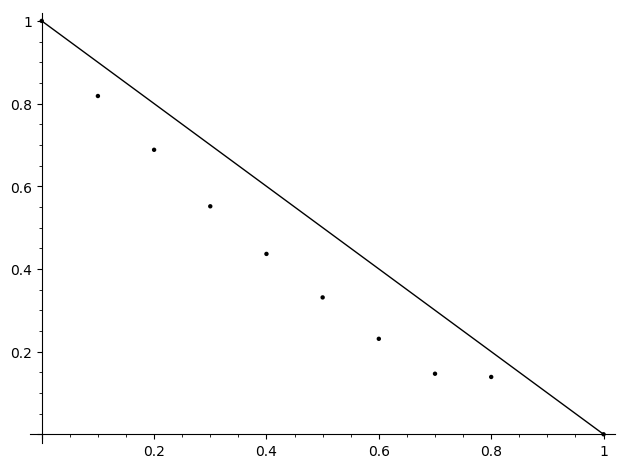}
\caption{$K = \mathbb{Q}(i)$}
\end{minipage}%
\begin{minipage}{.5\textwidth}
\includegraphics[width=0.9\textwidth]{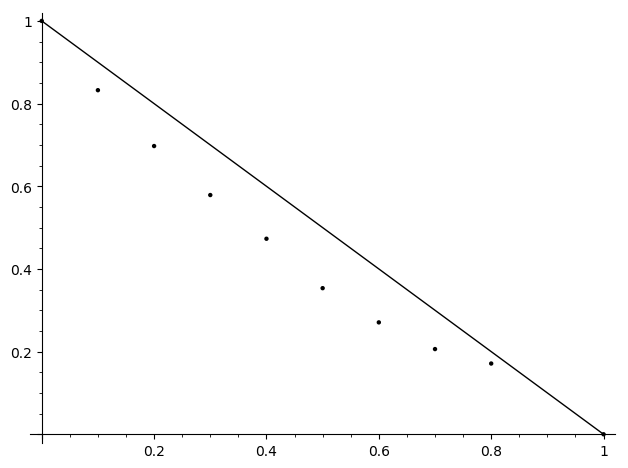}
\caption{$K = \mathbb{Q}(\sqrt{-3})$}
\end{minipage}
\end{figure}

\begin{figure}
\centering
\begin{minipage}{.5\textwidth}
\includegraphics[width=0.9\textwidth]{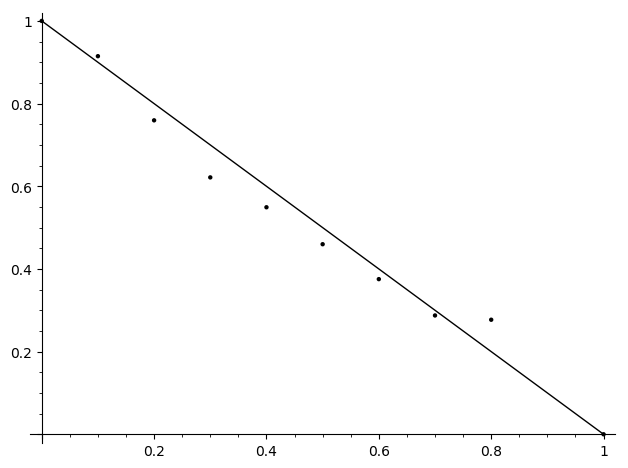}
\caption{$K = \mathbb{Q}(\sqrt{-5})$}
\end{minipage}%
\begin{minipage}{.5\textwidth}
\includegraphics[width=0.9\textwidth]{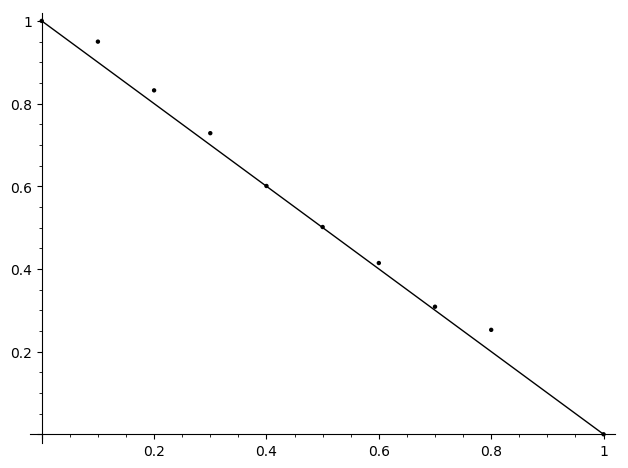}
\caption{$K = \mathbb{Q}(\sqrt{-7})$}
\end{minipage}
\end{figure}

\begin{figure}
\centering
\begin{minipage}{.5\textwidth}
\includegraphics[width=0.9\textwidth]{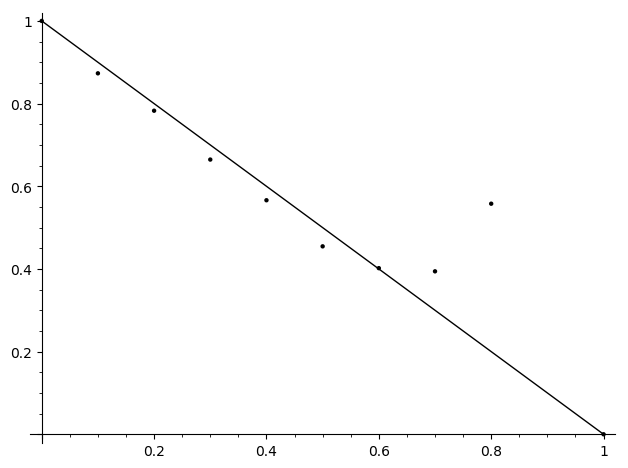}
\caption{$K = \mathbb{Q}(\sqrt{2})$}
\end{minipage}%
\begin{minipage}{.5\textwidth}
\includegraphics[width=0.9\textwidth]{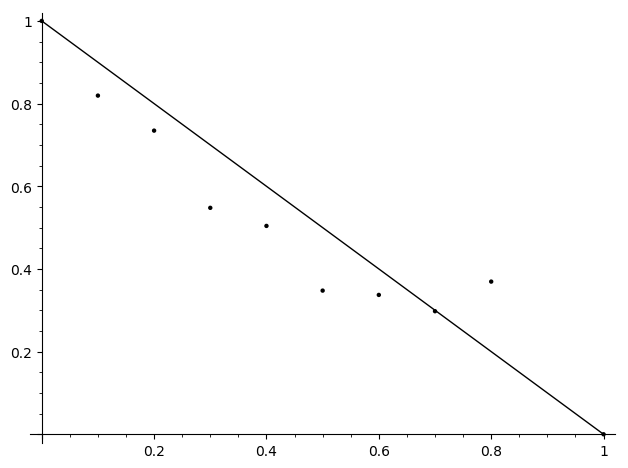}
\caption{$K = \mathbb{Q}(\sqrt{3})$}
\end{minipage}
\end{figure}

\begin{figure}
\centering
\begin{minipage}{.5\textwidth}
\includegraphics[width=0.9\textwidth]{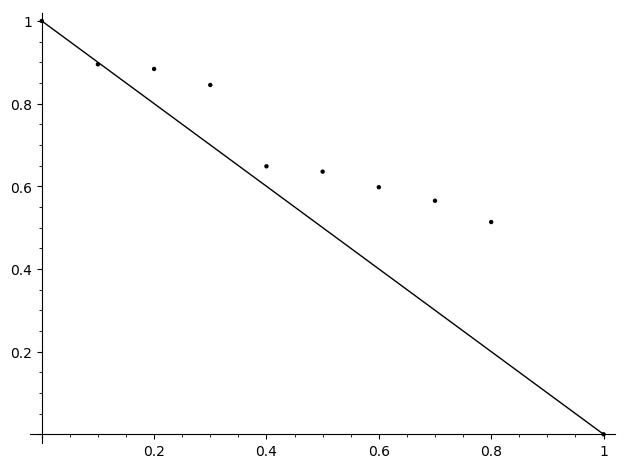}
\caption{$K=\mathbb{Q}(\sqrt{10})$}
\end{minipage}%
\end{figure}

\appendix
\section{Different ways to count primes in short intervals}
\label{TwoWeights}

We show here that there is no difference between \ref{conj:GoldstonMontgomery} and Conjecture \ref{conj:primevariance}:
\begin{prop}
\label{prop:TwoWeights}
Assume the Riemann hypothesis. Conjectures \ref{conj:GoldstonMontgomery} and \ref{conj:primevariance} are equivalent.
\end{prop}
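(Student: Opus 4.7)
The plan is to use partial summation to link the unweighted count $\widetilde{\pi}(x;H)$ with the $\Lambda$-weighted sum $\psi(x;H)-H$, exhibiting the former as essentially $(\psi(x;H)-H)/\log x$ modulo errors that are small in mean square under the Riemann hypothesis. Dividing the Goldston--Montgomery asymptotic by $(\log X)^2$ then exactly matches the asymptotic $(1-\delta)E_{\mathbb N}(X;H)\sim H\log(X/H)/(\log X)^2$ in Conjecture \ref{conj:primevariance}.

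First I would pass from $\psi(x;H)$ to $\theta(x;H):=\sum_{x<p\leq x+H}\log p$: the difference consists of contributions from prime powers $p^k$ with $k\geq 2$ lying in $(x,x+H]$, which is pointwise $O((1+H/\sqrt{x})\log^2 x)$. Integrating its square over $[0,X]$ gives $O(H^2\log^5 X/X+\log^4 X)$, negligible compared with the eventual target when $\delta\in(0,1)$. Next, partial summation gives the identity
$$\pi(x)-\mathrm{Li}(x)=\frac{\theta(x)-x}{\log x}+\int_2^x\frac{\theta(t)-t}{t\log^2 t}\,dt+C;$$
subtracting at $x$ and $x+H$ and invoking the RH bound $\theta(t)-t\ll t^{1/2}\log^2 t$ to control both the integral remainder and the change in the $1/\log$ weight over $[x,x+H]$ yields
$$\pi(x;H)-\bigl(\mathrm{Li}(x+H)-\mathrm{Li}(x)\bigr)=\frac{\theta(x;H)-H}{\log x}+O\!\left(\frac{H}{\sqrt{x}}\right).$$
A routine Euler--Maclaurin estimate replaces $\mathrm{Li}(x+H)-\mathrm{Li}(x)$ by $\sum_{x<n\leq x+H,\,n\geq 2}1/\log n$ at the cost of $O(1/\log x)$, producing
$$\widetilde{\pi}(x;H)=\frac{\theta(x;H)-H}{\log x}+O\!\left(\frac{H}{\sqrt{x}}+\frac{1}{\log x}\right).$$

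Squaring and integrating over $x\in[0,X]$, the mean square of the error is $O(H^2\log X/X+1/\log^2 X)$, and the cross term with the main piece is handled by Cauchy--Schwarz; both are of lower order than the target $H\log(X/H)/(\log X)^2$ when $\delta\in(0,1)$. Replacing $\log x$ by $\log X$ on the bulk of the range $[X/\log X,\,X]$ (the complement contributes negligibly) yields
$$V_{\mathbb N}(X;H)\sim\frac{1}{(\log X)^2}\cdot\frac{1}{X}\int_0^X(\psi(x;H)-H)^2\,dx.$$
The Goldston--Montgomery asymptotic \eqref{GoldstonMontgomery} then becomes $V_{\mathbb N}(X;H)\sim H\log(X/H)/(\log X)^2\sim(1-\delta)E_{\mathbb N}(X;H)$, and the argument is reversible since each step above is an equality up to controllable errors. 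The main technical obstacle will be the bookkeeping required to verify that the error terms are genuinely lower order in mean square uniformly for $\delta\in(0,1)$, and in particular that the $H/\sqrt{x}$ remainder does not swamp the main contribution when $\delta$ is close to $1$; this is precisely where the pointwise RH bound on $\theta(t)-t$ is decisive.
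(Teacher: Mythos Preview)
Your approach is essentially the paper's: use partial summation to write $\widetilde{\pi}(x;H)$ as $(\psi(x;H)-H)/\log x$ plus small remainders, then replace $\log x$ by $\log X$. The paper phrases the intermediate step via the Stieltjes integral $\int_x^{x+H} d\widetilde{\psi}(t)/\log t$ rather than passing through $\theta$, but the content is the same.

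There are, however, two genuine gaps. First, you never invoke Selberg's mean-square bound under RH,
\[
\frac{1}{X}\int_0^X \bigl(\psi(x;H)-H\bigr)^2\,dx = O(H\log^2 X),
\]
yet this is what makes the Cauchy--Schwarz step work and is the substantive use of RH in the argument. The pointwise bound $\theta(t)-t\ll t^{1/2}\log^2 t$ that you do cite only gives $(\theta(x;H)-H)^2=O(X^{1+\epsilon})$, which is far too weak to control the cross term or the tail. Second, your cutoff $X/\log X$ is too coarse. A dyadic decomposition together with Selberg's bound shows the contribution of $[2,X/\log X]$ to $\tfrac{1}{X}\int (\theta-H)^2/\log^2 x$ is of order $H/\log X$, the same size as the target, not $o$ of it; and on $[X/\log X,X]$ the pointwise error $|1/\log^2 x-1/\log^2 X|=O(\log\log X/\log^3 X)$, multiplied by Selberg's $O(H\log^2 X)$, yields a discrepancy of order $H\log\log X/\log X$, again not negligible. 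The paper fixes this by cutting at $X^{1-\epsilon}$ for fixed $\epsilon>0$: the tail is then $O(X^{-\epsilon}H\log^2 X)=o(H/\log X)$, the bulk carries a multiplicative factor $1+O(\epsilon)$, and one sends $\epsilon\to 0$ at the end.
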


In fact it may be seen that either of Conjectures \ref{conj:GoldstonMontgomery} or \ref{conj:primevariance} are stronger than the Riemann hypothesis; thus the assumption in this theorem is redundant. Nonetheless for convenience we make use of it in the proof below.

\begin{proof}
Define
$$
\widetilde{\psi}(x):= \sum_{n\leq x} \Lambda(n) - x,
$$
and recall that the Riemann hypothesis implies \cite{Ap}
$$
\widetilde{\psi}(x) = O(x^{1/2+\epsilon}).
$$
We also recall a result of Selberg \cite{Se}, that the Riemann hypothesis implies
\begin{equation}
\label{SelbergUpperBound}
\frac{1}{X} \int_0^X\Big| \sum_{x < n \leq x+H} \Lambda(n) -H \Big|^2 \, dx = O(H \log^2 X).
\end{equation}

Conjecture \ref{conj:GoldstonMontgomery} is equivalent to the claim
\begin{equation}
\label{conj1restate}
\frac{1}{X} \int_2^X \Big| \int_x^{x+H} d\widetilde{\psi}(t)\Big|^2 \, dx \sim H(\log X - \log H).
\end{equation}
(We now begin our integral at $2$ to avoid technicalities later in the proof.) We claim that Conjecture \ref{conj:primevariance} is equivalent to the claim
\begin{equation}
\label{conj2restate}
\frac{1}{X}\int_2^X \Big| \int_x^{x+H} \frac{d\widetilde{\psi}(t)}{\log t}  \Big|^2\, dx \sim \frac{H(\log X - \log H)}{\log^2 X}.
\end{equation}
Our plan of proof will thus be to show that \eqref{conj1restate} and \eqref{conj2restate} give the same information, but first we establish that \eqref{conj2restate} is equivalent to Conjecture \ref{conj:primevariance}:

The right hand side of \eqref{conj2restate} for $H = X^\delta$ is clearly
$$
\sim (1-\delta) E_\mathbb{N}(X;H),
$$
thus it remains only to show that the left hand side is approximated by $V_\mathbb{N}(X;H)$. But note that
\begin{align}
\label{pi_approx}
\notag \int_x^{x+H} \frac{d\widetilde{\psi}(t)}{\log t} &= \sum_{x < p \leq x+H} 1 + \sum_{\substack{x < p^k \leq x+H \\ k \geq 2}} \frac{1}{k} - \int_{x}^{x+H} \frac{dt}{\log t} \\
&= \widetilde{\pi}(x;H) + O(H X^{-1/2}),
\end{align}
as
$$
\sum_{\substack{x < p^k \leq x+H \\ k \geq 2}} \frac{1}{k} = \sum_{k\geq 2} \frac{1}{k}( \pi((x+H)^{1/k}) - \pi(x^{1/k}) \ll \sum_{k \geq 2} \frac{1}{k^2} H X^{1/k-1} \ll H X^{-1/2},
$$
using the trivial bound $\pi(b) - \pi(a) \ll b-a$ and $(x+H)^{1/k} - x^{1/k} \ll \tfrac{1}{k} H X^{1/k-1}$, and likewise as for $x\geq 2$,
$$
\int_x^{x+H} \frac{dt}{\log t} = \sum_{x < n \leq x+H} \frac{1}{\log n} + O\left(\frac{1}{\log x}
\right).
$$
Hence using \eqref{pi_approx} and the triangle inequality for $L^2$-norms,
$$
\left(\frac{1}{X} \int_2^X \Big| \int_x^{x+H} \frac{d\widetilde{\psi}(t)}{\log t}\Big|^2\, dx
\right)^{1/2} = (V_\mathbb{N}(X;H))^{1/2} + O(H X^{-1/2}).
$$
Because for $H = X^\delta$ we have $HX^{-1/2} = o(\sqrt{H/\log X})$, it is easily seen from this that Conjecture \ref{conj:primevariance} implies \eqref{conj2restate} and vice-versa.

It thus remains only to show that \eqref{conj1restate} and \eqref{conj2restate} are equivalent. Note that using integration by parts,
\begin{align*}
\int_x^{x+H} \frac{d\widetilde{\psi}(t)}{\log t} &= \frac{\widetilde{\psi}(x+H)}{\log(x+H)} - \frac{\widetilde{\psi}(x)}{\log x} + \int_x^{x+H} \frac{\widetilde{\psi}(t)}{t \log^2 t}\, dt \\
&= \frac{\widetilde{\psi}(x+H) - \widetilde{\psi}(x)}{\log x} + O(\frac{H}{X} X^{1/2+\epsilon}) + \int_x^{x+H} \frac{\widetilde{\psi}(t)}{t \log^2 t}\, dt \\
&= \frac{\widetilde{\psi}(x+H) - \widetilde{\psi}(x)}{\log x} + O(H X^{-1/2+\epsilon}).
\end{align*}
Hence again using the triangle inequality for $L^2$-norms,
\begin{multline}
\label{thetwoareclose}
\left(\frac{1}{X} \int_2^X \Big| \int_x^{x+H} \frac{d\widetilde{\psi}(t)}{\log t}\Big|^2\, dx
\right)^{1/2} \\= \left(\frac{1}{X} \int_2^X \Big| \frac{\widetilde{\psi}(x+H) - \widetilde{\psi}(x)}{\log x}\Big|^2\, dx
\right)^{1/2} + O(H X^{-1/2+\epsilon}).
\end{multline}
Because $\frac{1}{\log x} = \frac{1}{\log X}(1+O(\epsilon))$ for $x \geq X^{1-\epsilon}$, it is easy to deduce using Selberg's bound \eqref{SelbergUpperBound} that
\begin{multline*}
\frac{1}{X} \int_2^X \Big| \frac{\widetilde{\psi}(x+H) - \widetilde{\psi}(x)}{\log x}\Big|^2\, dx \\ = (1+o(1)) \frac{1}{X} \int_2^X \frac{|\widetilde{\psi}(x+H) - \widetilde{\psi}(x)|^2}{\log^2 X}\, dx + o(H/\log X).
\end{multline*}
Thus using \eqref{thetwoareclose} and again the fact that $H X^{-1/2+\epsilon} = o(\sqrt{H/\log X})$, it follows that \eqref{conj1restate} implies \eqref{conj2restate}, and vice-versa. This establishes the claimed equivalence.
\end{proof}

\Addresses

\end{document}